\newtheorem{theorem}{Theorem}[section]
\newtheorem{lemma}[theorem]{Lemma}
\newtheorem{proposition}[theorem]{Proposition}
\newtheorem{corollary}[theorem]{Corollary}
\theoremstyle{definition}
\newtheorem{remark}[theorem]{Remark}
\newtheorem{definition}[theorem]{Definition}
\newtheorem{example}[theorem]{Example}
\numberwithin{equation}{section}
\DeclareMathOperator{\Alg}{Alg}
\newcommand{\Sc}[1]{\mathcal{#1}}
\newcommand{\F}[1]{\mathfrak{#1}}
\newcommand{\B}[1]{\mathbb{#1}}
\newcommand{\isocross}{ \Sc{A} \sideset{_{N}}{_\alpha^{iso}}{\mathop{\times}} \Sc{S}}
\newcommand{\cross}{ \Sc{A} \sideset{_{N}}{_\alpha}{\mathop{\times}} \Sc{S}}
\newcommand{\fullcross}{\Sc{B}\times_{\beta}\Sc{G}}
\newcommand{\relcross}{\Sc{A}\sideset{_{N}}{_{\Sc{C},\alpha}}{\mathop{\times}} \Sc{S}}
\newcommand{\<}{\langle}
\renewcommand{\>}{\rangle}
\title[Semicrossed Products]{Nonself-adjoint semicrossed products by abelian semigroups}
\author{Adam Hanley Fuller}
\address{Pure Math. Dept., U. Waterloo, Waterloo, ON N2L-3G1, CANADA}
\email{a2fuller@math.uwaterloo.ca}
\subjclass[2010]{Primary 47L55; Secondary 47A20, 47L65}
\begin{document}

\begin{abstract}
	Let $\mathcal{S}$ be the semigroup $\mathcal{S}=\sum^{\oplus k}_{i=1}\Sc{S}_i$, where for each $i\in I$, 
	$\mathcal{S}_i$ is a countable subsemigroup of the additive semigroup $\B{R}_+$ containing $0$. We consider representations
	of $\mathcal{S}$ as contractions $\{T_s\}_{s\in\mathcal{S}}$ on a Hilbert space with the Nica-covariance property:
	$T_s^*T_t=T_tT_s^*$ whenever $t\wedge s=0$. We show that all such representations have a unique minimal isometric Nica-covariant
	dilation.

	This result is used to help analyse the nonself-adjoint semicrossed product algebras formed from Nica-covariant 		representations of the action of $\mathcal{S}$ on an operator algebra $\mathcal{A}$ by completely contractive 		endomorphisms.
	We conclude by calculating the $C^*$-envelope of the isometric nonself-adjoint semicrossed product algebra (in the sense
	of Kakariadis and Katsoulis).
\end{abstract}

\maketitle
\section{Introduction}
The study of nonself-adjoint semicrossed products began with Arveson \cite{Arveson1}. They were further studied by McAsey, Muhly
and Saito \cite{McAseyMuhly}. In both cases the algebras were described concretely. Peters \cite{peters1} described the nonself-adjoint semicrossed products as universal algebras for covariant representations. In recent years, Davidson and Katsoulis have shown nonself-adjoint semicrossed products have proven to be a particularly interesting and tractable class of operator algebras 
\cite{DavKat0,DavKat3,DavKat4,DavKat6,DavKat1}. In particular, nonself-adjoint semicrossed product algebras have been shown to be a class where the $C^*$-envelope is often calculable.

The $C^*$-envelope of an operator algebra $\Sc{A}$ was introduced by Arveson \cite{Arveson2,Arveson3} as a non-commutative analogue of Shilov boundaries. The existence of the $C^*$-envelope was first discovered by Hamana \cite{Hamana}. Dritschel and McCullough 
\cite{DritMcCul} have since provided an alternative proof of the existence of the $C^*$-envelope. The viewpoint of Dritschel
and McCullough has allowed for the explicit calculation of the $C^*$-envelope of many operator algebras. In particular, for nonself-adjoint semicrossed products the $C^*$-envelopes have been studied in
\cite{DavKat3,kak, kakkat,Duncan,DuncanPeters}.

In this paper we study the nonself-adjoint semicrossed product algebras by semigroups of the form
$\Sc{S}=\sum^{\oplus k}_{i=1}\Sc{S}_i$, where for each $i\in I$ we have $\Sc{S}_i$ is a countable subsemigroup of the
additive semigroup $\B{R}_+$ containing $0$. Our algebras will be universal for Nica-covariant covariant representations, i.e. those
representations $\{T_s\}_{s\in\Sc{S}}$ satisfying $T_s^*T_t=T_tT_s^*$ when $s\wedge t=0$. Semicrossed product algebras associated
to Nica-covariant representations have been widely studied in the $C^*$-algebra literature \cite{Nica,LacaRaeburn,Fowler}.

The paper is divided into three sections. In section 2 Nica-covariant representations are studied independent from dynamical systems. The results of this section may be of interest, even to those not concerned with nonself-adjoint semicrossed products. We show that contractive Nica-covariant representations can be dilated to isometric Nica-covariant representations. This result is well-known for the case of the semigroup $\B{Z}_+^k$, see e.g. \cite{Timotin}. The proof of the existence of an isometric dilation presented here relies on the use of a generalisation of the Schur Product Theorem, and so provides an alternative proof to what is usually presented for $\B{Z}_+^k$.

In section 3 nonself-adjoint semicrossed product algebras are introduced. In section 3.1 we extend our dilation result from 
section 2 to representations of semicrossed products of $C^*$-algebras. This result allows us to conclude strong results comparing the different types of semicrossed product algebras. For example, Corollary \ref{same alg}, tells us that, in the case of a semicrossed product of a $C^*$-algebra, the universal algebra for \emph{completely isometric} Nica-covariant representations is the same as the universal algebra for \emph{completely contractive} Nica-covariant representations. If  we were to work with completely isometric and completely contractive semicrossed algebras without imposing the condition of Nica-covariance on  our semigroup representations, then an example due to Varopoulos \cite{Var} would show that the analogy of Corollary \ref{same alg} would fail in this setting.

In the section 3.2 we consider the $C^*$-envelope of the isometric semicrossed product algebras. In Theorem \ref{c* env} we calculate the $C^*$-envelope of the isometric semicrossed product as
	\begin{equation*}
		C^*_{env}(\isocross)\cong C^*_{env}(\Sc{A})\times_\alpha \Sc{G},
	\end{equation*}
where $\Sc{G}$ is the group generated by $\Sc{S}$.
This result generalises a recent result of Kakariadis and Katsoulis
\cite{kakkat}, where they worked with the semigroup $\Sc{S}=\B{Z}_+$.

When $\Sc{A}$ is a $C^*$-algebra the Nica-covariance requirement on our representations allows us to view the semicrossed product algebra $\cross$ as a tensor algebra for a product system of $C^*$-correspondences over $\Sc{S}$. Thus, from this viewpoint we unite a recent result of Duncan and Peters \cite{DuncanPeters} on the
$C^*$-envelope of a tensor algebra associated with a dynamical system and the results of Kakariadis and Katsoulis on the $C^*$-envelope of the isometric semicrossed product for a dynamical system.

\section{Nica-covariant representations of abelian semigroups}
Let $\Sc{S}$ be the semigroup $\Sc{S}=\sum^\oplus_{i\in I}\Sc{S}_i$, where for each $i\in I$ we have $\Sc{S}_i$ is a subsemigroup in the additive semigroup $\B{R}_+$ containing $0$. We further assume throughout that $\Sc{S}$ is the positive cone of the group $\Sc{G}$ it generates. Denote by $\wedge$ and $\vee$ the join and meet operations on the lattice group $\Sc{G}$. In section 3 we will be looking at the case when $\Sc{S}$ is countable. However, we will not need to assume that $\Sc{S}$ is countable in this section.

\begin{definition}
	A representation $T:\Sc{S}\rightarrow\Sc{B}(\Sc{H})$ of $\Sc{S}$ by contractions $\{T_s\}_{s\in\Sc{S}}$ on a Hilbert space $\Sc{H}$ is \emph{Nica-covariant} when
	we have the following relation: if $s\in\Sc{S}_i$ and $t\in\Sc{S}_j$ where $i\neq j$ then $T_s^*T_t=T_tT_s^*$.
\end{definition}

\subsection{Isometric Dilations}
We wish to show that every Nica-covariant contractive representation of $\Sc{S}$ can be dilated to an isometric representation. Further, we will show that there is a unique
minimal isometric dilation which is Nica-covariant. This result is well known in the discrete $\Sc{S}=\B{Z}_+^k$ case.  If each $\Sc{S}_i$ is \emph{commensurable}, i.e. if for all $s_1,\ldots,s_n\in\Sc{S}_i$
there exists $s_0\in\Sc{S}_i$ and $k_1,\ldots,k_n\in\B{N}$ such that $s_i=k_is_0$, then these results have been described by Shalit \cite{Shal1}. We do not impose the
condition of commensurability.

The key method to show the existence of the dilation is to use a generalisation of the Schur Product Theorem. To show that there is a minimal Nica-covariant isometric dilation
we follow arguments similar to those of Solel \cite{Solel}.

\begin{definition}
	Let $A=[A_{i,j}]_{1\leq i,j\leq m}$ and $B=[B_{i,j}]_{1\leq i,j\leq m}$	be two matrices of operators where each $A_{i,j}$ and $B_{i,j}$ is a bounded operator on
	a Hilbert space $\Sc{H}$.
	The \emph{operator-valued Schur product} of $A$ and $B$ is defined by $A\square B:=[A_{i,j}B_{i,j}]_{1\leq i,j\leq m}$. 
\end{definition}

In the above definition, if $\Sc{H}$ is $1$-dimensional then the operation $\square$ is simply the classical Schur product (or entry-wise product). In the following theorem
we will generalise the Schur Product Theorem, which says that the Schur product of two positive matrices is positive. See e.g. \cite[Chapter 3]{Paul}.

\begin{theorem}\label{schur prod theorem}
	Let $\Sc{A}$ and $\Sc{B}$ be two $C^*$-algebras in $\Sc{B}(\Sc{H})$ such that $\Sc{A}\subseteq\Sc{B}'$. Let $A=[A_{i,j}]_{1\leq i,j\leq m}$
	and $B=[B_{i,j}]_{1\leq i,j\leq m}$ be operator matrices 
	with all $A_{i,j}\in\Sc{A}$ and $B_{i,j}\in\Sc{B}$. If $A\geq0$ and $B\geq0$ then $A\square B\geq0$.
\end{theorem}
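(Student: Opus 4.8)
The plan is to realize the noncommutative Schur product $A\square B$ as a \emph{compression} of a genuine operator product $\tilde A\tilde B$, and to use the hypothesis $\Sc{A}\subseteq\Sc{B}'$ exactly to force that product to be positive. The guiding model is the classical proof in which $A\circ B$ is recovered as the principal submatrix of the Kronecker product $A\otimes B$ indexed by the diagonal pairs $(i,i)$.

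First I would pass to an ampliation on $\Sc{H}\otimes\B{C}^m\otimes\B{C}^m$, indexing the coordinate copies of $\Sc{H}$ by pairs $(i,k)$ with $1\leq i,k\leq m$, and define operator matrices $\tilde A,\tilde B$ by $(\tilde A)_{(i,k),(j,l)}=A_{i,j}\delta_{k,l}$ and $(\tilde B)_{(i,k),(j,l)}=\delta_{i,j}B_{k,l}$. Writing a vector as a family $(\xi_{(i,k)})$ in $\Sc{H}$, the quadratic form regroups as $\langle\tilde A\xi,\xi\rangle=\sum_k\langle A\eta_k,\eta_k\rangle$ with $\eta_k=(\xi_{(1,k)},\dots,\xi_{(m,k)})$, and symmetrically $\langle\tilde B\xi,\xi\rangle=\sum_i\langle B\zeta_i,\zeta_i\rangle$ with $\zeta_i=(\xi_{(i,1)},\dots,\xi_{(i,m)})$. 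Since $A\geq0$ and $B\geq0$, each summand is nonnegative, so $\tilde A\geq0$ and $\tilde B\geq0$.

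Next I would compute the product entrywise: $(\tilde A\tilde B)_{(i,k),(j,l)}=A_{i,j}B_{k,l}$ while $(\tilde B\tilde A)_{(i,k),(j,l)}=B_{k,l}A_{i,j}$. This is precisely the step where the commutation hypothesis enters: because $A_{i,j}\in\Sc{A}\subseteq\Sc{B}'$ and $B_{k,l}\in\Sc{B}$, every entry of $A$ commutes with every entry of $B$, whence $\tilde A\tilde B=\tilde B\tilde A$. Thus $\tilde A$ and $\tilde B$ are commuting positive operators, and their product is positive: setting $P=\tilde A$ and $Q=\tilde B$, the positive square root $P^{1/2}$ commutes with $Q$ (it is a norm-limit of polynomials in $P$), so $PQ=P^{1/2}QP^{1/2}=(P^{1/2})^*QP^{1/2}\geq0$. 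Finally, letting $V\colon\Sc{H}\otimes\B{C}^m\to\Sc{H}\otimes\B{C}^m\otimes\B{C}^m$ be the isometry carrying the $i$-th copy of $\Sc{H}$ to the $(i,i)$ copy, the compression $V^*(\tilde A\tilde B)V$ has $(i,j)$ entry $A_{i,j}B_{i,j}$, i.e.\ it equals $A\square B$; since a compression of a positive operator is positive, we conclude $A\square B\geq0$.

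The main obstacle here is conceptual rather than computational: one must choose the ampliation so that the entrywise product $[A_{i,j}B_{i,j}]$ appears as a compression of a \emph{product} $\tilde A\tilde B$ of two positive operators, and then recognise that the single hypothesis $\Sc{A}\subseteq\Sc{B}'$ is exactly what is needed for those two ampliations to commute. Once this is arranged, the remaining ingredients—that a product of commuting positive operators is positive, and that compressions preserve positivity—are entirely standard.
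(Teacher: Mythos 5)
Your proof is correct and takes essentially the same approach as the paper: your ampliations $\tilde A$ and $\tilde B$ on $\Sc{H}\otimes\B{C}^m\otimes\B{C}^m$ are exactly the paper's $A\otimes I_m$ and $[B_{i,j}\otimes I_m]$ on $\Sc{H}^{(m^2)}$ (up to a reordering of coordinates), and your diagonal isometry $V$ is the paper's $R$. You merely spell out a few standard facts (positivity of the ampliations, positivity of a product of commuting positive operators) that the paper leaves implicit.
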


\begin{proof}
	Let $\tilde{A}=A\otimes I_m$ and $\tilde{B}=[B_{i,j}\otimes I_m]_{1\leq i,j\leq m}$. Hence $\tilde{A}$ and $\tilde{B}$ are of the form
	\begin{equation*}
		\tilde{A}=\begin{bmatrix}
			A&0&\ldots&0\\
			0&A&\ldots&0\\
			\vdots&&\ddots&\vdots\\
			0&0&\ldots&A
		\end{bmatrix}
	\end{equation*}
	and
	\begin{equation*}
		\tilde{B}=\begin{bmatrix}
			B_{1,1}&0&\ldots&0&\ldots&B_{1,m}&0&\ldots&0\\
			0&B_{1,1}&\ldots&0&\ldots&0&B_{1,m}&\ldots&0\\
			\vdots&&\ddots&\vdots&&\vdots&&\ddots&\vdots\\
			0&0&\ldots&B_{1,1}&\ldots&0&0&\ldots&B_{1,m}\\
			\vdots&&&&\vdots&&&&\vdots\\
			B_{m,1}&0&\ldots&0&\ldots&B_{m,m}&0&\ldots&0\\
			0&B_{m,1}&\ldots&0&\ldots&0&B_{m,m}&\ldots&0\\
			\vdots&&\ddots&\vdots&&\vdots&&\ddots&\vdots\\
			0&0&\ldots&B_{m,1}&\ldots&0&0&\ldots&B_{m,m}
		\end{bmatrix}
	\end{equation*}
	It follows that $\tilde{A}$ and $\tilde{B}$ are positive commuting operators.
	Hence $\tilde{A}\tilde{B}$ is positive.

	For each $1\leq k\leq m$, let $P_k$ be the projection onto the $(m(k-1)+k)^{th}$ copy of $\Sc{H}$ in $\Sc{H}^{(m^2)}$, and let $P=\sum_{k=1}^m P_k$.
	Define $R:\Sc{H}^{(m)}\rightarrow\Sc{H}^{(m^2)}$ by $R\mathbf{h}=P(\mathbf{h}^{\otimes m})$. Hence $R$ is an isometry and for
	\begin{equation*}
		\mathbf{h}=\begin{bmatrix}h_1\\h_2\\ \vdots\\ h_m\end{bmatrix}\in\Sc{H}^{(m)}
	\end{equation*}
	we have
	\begin{equation*}
		R\mathbf{h}=\begin{bmatrix}h_1\\0\\ \vdots\\0\\h_2\\0\\ \vdots\\0\\ h_m\end{bmatrix},
	\end{equation*}
	with $m$ zeroes between $h_i$ and $h_{i+1}$, $1\leq i<m$.
	It follows that $R^*(\tilde{A}\tilde{B})R=A\square B$.
	Thus, $A\square B$ is positive.
\end{proof}

Let $T$ be a Nica-covariant contractive representation of $\Sc{S}$ on $\Sc{H}$. We extend $T$ to a map on all of $\Sc{G}$ in the following way. Any element $g\in\Sc{G}$ can be
written uniquely as $g=g_+-g_{-}$ where $g_-,g_+\in\Sc{S}$ and $g_-\wedge g_+=0$. Thus we extend $T$ to $\Sc{G}$ by setting $T_g=T_{g_-}^*T_{g_+}=T_{g_+}T_{g_-}^*$. A well-known theorem 
of Sz.-Nagy says that $T$ has an isometric dilation if and only if for $s_1,\ldots,s_n\in\Sc{S}$ the operator matrix $[T_{s_j-s_i}]_{1\leq i,j\leq n}$ is positive (see e.g. 
\cite[Theorem 7.1]{Nagy}). We will need to look more closely at the proof of this later.

In the case when is $\Sc{S}$ is a subsemigroup of $\B{R}_+$ it has been proved by Mlak \cite{Mlak} that a contractive representation $T$ has an isometric dilation. In the following theorem we will rely on the fact that the representation $T$ restricted to $\Sc{S}_i$ has an isometric dilation for each $i$. Then an invocation of Theorem \ref{schur prod theorem} will
give us our result.

\begin{theorem}\label{dilation exists}
	Let $T$ be a Nica-covariant contractive representation of the semigroup $\Sc{S}=\sum^\oplus_{i\in I}\Sc{S}_i$, where each $\Sc{S}_i$ is subsemigroup of $\B{R}_+$ containing $0$.
	Then $T$ has	an isometric dilation.
\end{theorem}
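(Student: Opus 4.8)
The plan is to verify the positivity criterion from the Sz.-Nagy theorem quoted above: since the existence of an isometric dilation is equivalent to the positivity of the operator matrices $[T_{s_j-s_i}]_{1\leq i,j\leq n}$, it suffices to show that $M:=[T_{s_j-s_i}]_{1\leq i,j\leq n}$ is positive for every finite tuple $s_1,\ldots,s_n\in\Sc{S}$.

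First I would exploit the direct-sum structure of $\Sc{S}$ to factor each entry of $M$ coordinate by coordinate. Write each $s\in\Sc{S}$ as $s=(s^{(k)})_{k\in I}$ with $s^{(k)}\in\Sc{S}_k$ and only finitely many coordinates nonzero; since the lattice operations on $\Sc{G}$ are computed coordinatewise, for $g\in\Sc{G}$ one has $g_\pm=(\,(g^{(k)})_\pm\,)_k$. Using that $\{T_s\}_{s\in\Sc{S}}$ is a commuting family together with Nica-covariance (which allows me to slide $T_{(g^{(k)})_-}^*$ past $T_{(g^{(l)})_+}$ whenever $k\neq l$), I would show $T_g=\prod_k T_{g^{(k)}}$, a commuting product over the finitely many relevant coordinates. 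Applied to $g=s_j-s_i$ this gives
\[
	M_{i,j}=T_{s_j-s_i}=\prod_{k} T_{s_j^{(k)}-s_i^{(k)}},
\]
so that $M=M^{(k_1)}\square\cdots\square M^{(k_p)}$ is an iterated operator-valued Schur product of the coordinate matrices $M^{(k)}:=[T_{s_j^{(k)}-s_i^{(k)}}]_{1\leq i,j\leq n}$, where $k_1,\ldots,k_p$ enumerate the finitely many coordinates in which some $s_j$ is nonzero.

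Next I would argue each factor $M^{(k)}$ is positive. The restriction of $T$ to $\Sc{S}_k$ is a contractive representation of a subsemigroup of $\B{R}_+$, so by Mlak's theorem \cite{Mlak} it admits an isometric dilation; the easy direction of the Sz.-Nagy criterion then forces $M^{(k)}\geq0$. To combine these via Theorem \ref{schur prod theorem}, let $\Sc{A}_k$ be the $C^*$-algebra generated by $\{T_s:s\in\Sc{S}_k\}$. Nica-covariance together with commutativity of $\{T_s\}_{s\in\Sc{S}}$ shows that every generator of $\Sc{A}_k$ commutes with every generator of $\Sc{A}_l$ for $k\neq l$, whence $\Sc{A}_k\subseteq\Sc{A}_l'$.

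Finally I would apply Theorem \ref{schur prod theorem} inductively. The entries of $M^{(k_1)}$ lie in $\Sc{A}_{k_1}$ and those of $M^{(k_2)}$ in $\Sc{A}_{k_2}\subseteq\Sc{A}_{k_1}'$, so $M^{(k_1)}\square M^{(k_2)}\geq0$; its entries lie in the $C^*$-algebra generated by $\Sc{A}_{k_1}\cup\Sc{A}_{k_2}$, which commutes with $\Sc{A}_{k_3}$. Iterating, each partial Schur product remains positive with entries in an algebra commuting with the next coordinate algebra, and after $p$ steps I obtain $M\geq0$. By the Sz.-Nagy theorem, $T$ then has an isometric dilation. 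The main point requiring care is the bookkeeping in this induction: one must check that after each Schur product the resulting entries still lie in a $C^*$-algebra satisfying the commutation hypothesis of Theorem \ref{schur prod theorem} with the remaining coordinate algebras, which is exactly where the mutual commutation of the $\Sc{A}_k$ supplied by Nica-covariance is used.
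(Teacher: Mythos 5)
Your proposal is correct and follows essentially the same route as the paper: reduce to positivity of $[T_{s_j-s_i}]$ via the Sz.-Nagy criterion, factor the matrix coordinatewise as an operator-valued Schur product using Nica-covariance, obtain positivity of each coordinate matrix from Mlak's dilation theorem, and conclude with Theorem \ref{schur prod theorem}. Your explicit bookkeeping of the inductive Schur-product step --- verifying that the partial products have entries in $C^*(\Sc{A}_{k_1}\cup\cdots\cup\Sc{A}_{k_j})$, which commutes with $\Sc{A}_{k_{j+1}}$ --- is a point the paper leaves implicit, and is a worthwhile addition rather than a deviation.
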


\begin{proof}
	Take $s_1,\ldots,s_n$ in $\Sc{S}$. By \cite[Theorem 7.1]{Nagy} it suffices to show that the operator matrix $[T_{s_j-s_i}]_{1\leq i,j\leq n}$ is positive. Each $s_j$ is of the form
	$s_j=\sum_{i\in I}s_j^{(i)}$, where $s_j^{(i)}$ is in $\Sc{S}_i$. We can choose a finite subset $F\subseteq I$ such that $s_j=\sum_{i\in F}s_j^{(i)}$ for $j=1,\ldots,n$. 
	Since $F$ is finite we can and will relabel $F$ by $\{1,\ldots,k\}$ for some $k$. Denote
	by $T^{(j)}$ the restriction of $T$ to $\Sc{S}_j$.
	
	By the Nica-covariance property
	\begin{equation*}
		T_{s_j-s_i}=T^{(1)}_{s_j^{(1)}-s_i^{(1)}}\ldots T^{(k)}_{s_j^{(k)}-s_i^{(k)}}.
	\end{equation*}
	Thus, we can factor the operator matrix $[T_{s_j-s_i}]$ as
	\begin{align*}
		[T_{s_j-s_i}]_{i,j}
		&=\begin{bmatrix}T^{(1)}_{s_j^{(1)}-s_i^{(1)}}\ldots T^{(k)}_{s_j^{(k)}-s_i^{(k)}}\end{bmatrix}_{i,j}\\
		&=\begin{bmatrix}T^{(1)}_{s_j^{(1)}-s_i^{(1)}}\end{bmatrix}_{i,j}\square\ldots\square\begin{bmatrix}T^{(k)}_{s_j^{(k)}-s_i^{(k)}}\end{bmatrix}_{i,j}.
	\end{align*}
	Since $\big[T^{(l)}_{s_j^{(l)}-s_i^{(l)}}\big]_{i,j}$ is a positive matrix for $1\leq l\leq k$ \cite{Mlak} and since the 	representation is Nica-covariant, it follows by
	Theorem \ref{schur prod theorem} that $[T_{s_j-s_i}]_{i,j}$ is positive.
\end{proof}

In the above we made use of \cite[Theorem 7.1]{Nagy} to guarantee the existence of a dilation. We will now pay closer attention to how the dilation there is constructed. Then,
following similar arguments of \cite{Solel}, we will show that there is a unique minimal Nica-covariant isometric dilation.

\begin{theorem}\label{dilation is Nica-cov}
	Let $T$ be a Nica-covariant contractive representation of the semigroup $\Sc{S}=\sum^\oplus_{i\in I}\Sc{S}_i$, where each $\Sc{S}_i$ is subsemigroup of $\B{R}_+$ containing $0$.
	Then $T$ has a minimal isometric dilation which is Nica-covariant. Further, this dilation is unique.
\end{theorem}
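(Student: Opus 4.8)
The plan is to unpack the concrete minimal isometric dilation hidden inside Sz.-Nagy's theorem, verify by hand that it is Nica-covariant, and then show that Nica-covariance together with minimality pins it down up to the obvious unitary equivalence.

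First I would recall the construction behind \cite[Theorem 7.1]{Nagy}. Let $\Sc{K}_0$ be the algebraic direct sum $\bigoplus_{s\in\Sc{S}}\Sc{H}$, whose elementary vectors I write $\delta_s\otimes h$ ($s\in\Sc{S}$, $h\in\Sc{H}$), and equip it with the sesquilinear form determined by $\<\delta_s\otimes h,\delta_t\otimes k\>=\<T_{s-t}h,k\>$. The positivity of the matrices $[T_{s_j-s_i}]$ established in the proof of Theorem \ref{dilation exists} is exactly the statement that this form is positive semidefinite. Let $\Sc{K}$ be the Hilbert space obtained by quotienting out the null vectors and completing, embed $\Sc{H}\hookrightarrow\Sc{K}$ via $h\mapsto\delta_0\otimes h$ (isometric since $T_0=I$), and define $V_s$ on the dense span by the shift $\delta_u\otimes h\mapsto\delta_{u+s}\otimes h$. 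A direct check shows each $V_s$ is an isometry, $V_sV_t=V_{s+t}$, and $P_{\Sc{H}}V_s|_{\Sc{H}}=T_s$; since $\delta_s\otimes h=V_s(\delta_0\otimes h)$, the span of $\{V_s\Sc{H}\}_{s\in\Sc{S}}$ is dense, so this dilation is minimal. A computation against the spanning vectors also yields the co-invariance relation $V_s^*|_{\Sc{H}}=T_s^*$, and I would note that this relation in fact holds for \emph{any} minimal isometric dilation, by the standard argument comparing $\<W_t^*h,W_sk\>$ with $\<T_t^*h,W_sk\>$ using only $P_{\Sc{H}}W_g|_{\Sc{H}}=T_g$ and minimality.

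The crux is Nica-covariance of $V$. The key lemma I would prove is the closed form $V_s^*(\delta_u\otimes h)=\delta_{u\vee s-s}\otimes T_{s-u\wedge s}^*h$; pairing with $\delta_v\otimes k$ this amounts to the operator identity $T_{u-s-v}=T_{(u\vee s-s)-v}\,T_{s-u\wedge s}^*$. Here I would invoke the coordinatewise factorization $T_g=\prod_i T^{(i)}_{g^{(i)}}$ of the extended map — valid because Nica-covariance makes factors from different coordinates commute, exactly as in the proof of Theorem \ref{dilation exists} — to reduce the identity to one coordinate at a time. In a single coordinate $\Sc{S}_i\subseteq\B{R}_+$ the order is total, so the disjointness $(u\vee s-s)\wedge(s-u\wedge s)=0$ forces one of the two exponents to vanish and the identity collapses to ordinary semigroup multiplicativity $T^{(i)}_aT^{(i)}_b=T^{(i)}_{a+b}$. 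With this formula in hand, for $s\in\Sc{S}_i$ and $t\in\Sc{S}_j$ with $i\neq j$ I would compute $V_s^*V_t(\delta_u\otimes h)$ and $V_tV_s^*(\delta_u\otimes h)$ directly; because $s$ and $t$ act on disjoint coordinates, the lattice expressions $u\vee s-s+t$ and $(u+t)\vee s-s$ agree, as do $s-u\wedge s$ and $s-(u+t)\wedge s$, so both sides equal the same vector $\delta_q\otimes T_r^*h$. This gives $V_s^*V_t=V_tV_s^*$, i.e.\ $V$ is Nica-covariant. I expect this step — in particular getting the general formula for $V_s^*$ correct, since $u-s$ need not lie in $\Sc{S}$ — to be the main obstacle.

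Finally, for uniqueness I would follow Solel's inner-product-matching argument \cite{Solel}. Let $V$ on $\Sc{K}$ and $W$ on $\Sc{L}$ be two minimal isometric Nica-covariant dilations of $T$. For $s,t\in\Sc{S}$, write $a=s\wedge t$, $s=a+s'$, $t=a+t'$ with $s'\wedge t'=0$; isometry gives $W_t^*W_s=W_{t'}^*W_{s'}$, and Nica-covariance (extended from the generators to disjoint supports by factoring into commuting pieces) rewrites this as $W_{s'}W_{t'}^*$. Hence, using co-invariance twice,
\begin{equation*}
\<W_sh,W_tk\>=\<W_{t'}^*h,W_{s'}^*k\>=\<T_{t-s\wedge t}^*h,\,T_{s-s\wedge t}^*k\>,
\end{equation*}
which depends only on $T$. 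The same identity holds for $V$, so $\<V_sh,V_tk\>=\<W_sh,W_tk\>$ for all $s,t\in\Sc{S}$ and $h,k\in\Sc{H}$. Therefore $U\colon V_sh\mapsto W_sh$ is a well-defined isometry on the dense span $\bigvee_sV_s\Sc{H}$, extends to a unitary $\Sc{K}\to\Sc{L}$ (surjective since $W$ is minimal) that fixes $\Sc{H}$ and intertwines $V$ and $W$. This is precisely the asserted uniqueness up to unitary equivalence.
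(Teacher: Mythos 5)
Your proposal is correct, and it shares the paper's overall architecture --- the same Sz.-Nagy--type construction of $\Sc{K}$ from finitely supported $\Sc{H}$-valued functions with the form $\<\delta_u\otimes h,\delta_v\otimes k\>=\<T_{u-v}h,k\>$, and essentially the same inner-product-matching uniqueness argument --- but you handle the central step, Nica-covariance of $V$, by a genuinely different device. The paper never computes $V_s^*$ explicitly: it first proves the regularity property $P_{\Sc{H}}V_g|_{\Sc{H}}=T_g$ for all $g\in\Sc{G}$, and then verifies Nica-covariance in two stages of matrix-coefficient computations against the dense set $\{V_\mu\hat{h}\}$ (first $V_s^*V_\mu|_{\Sc{H}}=V_\mu V_s^*|_{\Sc{H}}$ for $s\in\Sc{S}_i$ with $s\wedge\mu=0$, then $\<V_s^*V_tV_\mu\hat{h},V_\nu\hat{k}\>=\<V_tV_s^*V_\mu\hat{h},V_\nu\hat{k}\>$), juggling lattice identities such as $(t+\mu-\nu-s)_{\pm}$. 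Your key lemma --- the closed form $V_s^*(\delta_u\otimes h)=\delta_{(u-s)_+}\otimes T_{(u-s)_-}^*h$, where $(u-s)_+=u\vee s-s$ and $(u-s)_-=s-u\wedge s$ --- is correct, and your reduction of the required identity $T_{u-s-v}=T_{(u-s)_+-v}T_{(u-s)_-}^*$ is sound: the coordinatewise factorization of the extended map (the same trick as in Theorem \ref{dilation exists}) applies, and since $(u-s)_+\wedge(u-s)_-=0$ each coordinate is either a tautology or the adjoint identity $T_\gamma^*T_\beta^*=T_{\gamma+\beta}^*$. What your route buys: the formula specializes at $u=0$ to co-invariance $V_s^*|_{\Sc{H}}=T_s^*$ (hence regularity), and $V_s^*V_t=V_tV_s^*$ then holds as an honest operator identity on all of $\Sc{K}$ by comparing two explicit vectors on the spanning set, so the paper's two-stage restrict-to-$\Sc{H}$-then-extend argument disappears. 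What the paper's route buys: it never has to guess and verify a global adjoint formula --- the step you rightly flag as delicate, since $u-s\notin\Sc{S}$ in general --- and its first stage stays inside one coordinate because $(\mu-s)_-\leq s$ forces $(\mu-s)_-\in\Sc{S}_i$, a point your coordinatewise reduction absorbs automatically. Your uniqueness paragraph is the paper's argument with the implicit ingredients made explicit: the chain $\<V_\nu^*V_\mu h_1,h_2\>=\<T_{\mu-\nu}h_1,h_2\>$ in the paper silently uses both the co-invariance of $\Sc{H}$ under any minimal isometric dilation and the extension of Nica-covariance from generators to disjoint pairs, exactly as you spell out; note only that co-invariance needs just $P_{\Sc{H}}W_s|_{\Sc{H}}=T_s$ for $s\in\Sc{S}$ and minimality, not the regular property over all of $\Sc{G}$.
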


\begin{proof}
	We first sketch the details of the construction of an isometric dilation. Let $\Sc{H}$ be the space on which 		the representation $T$ acts. Let $\Sc{K}_0$ denote the space
	of all finitely non-zero functions $f:\Sc{S}\rightarrow\Sc{H}$. For $f,g\in\Sc{K}_0$ we define
		\begin{equation*}
			\<f,g\>=\sum_{s,t\in\Sc{S}}\<T_{t-s}f(t),g(s)\>.
		\end{equation*}
	By Theorem \ref{dilation exists} this defines a positive semidefinite sesquilinear form on
	$\Sc{K}_0$. Let 
	\begin{align*}
		\Sc{N}&=\{f\in\Sc{K}_0:\<f,f\>=0\}\\ &=\{f\in\Sc{K}_0:\<f,g\>=0\},
	\end{align*}
	and set $\Sc{K}=\overline{\Sc{K}_0/\Sc{N}}$, where the
	closure is taken
	with respect to the norm induced by $\<\cdot,\cdot\>$.
	We isometrically embed $\Sc{H}$ in $\Sc{K}$ by the map $h\mapsto\hat{h}$, where $\hat{h}(s)=\delta_0(s)h$.
	
	Now define maps $V_s$ on $\Sc{K}_0$ by $(V_sf)(t)=f(t-s)$ if $t-s\in\Sc{S}$ and $(V_sf)(t)=0$ otherwise. Note 
	that for $f\in\Sc{K}_0$ and $u\in\Sc{S}$ we have
	\begin{align*}	
		\<V_uf,V_uf\>&=\sum_{s,t}\<T_{t-s}f(t-u),f(s-u)\>\\
			&=\sum_{s,t}\<T_{(t+u)-(s+u)}f(t),f(s)\>\\
			&=\sum_{s,t}\<T_{t-s}f(t),f(s)\>=\<f,f\>.
	\end{align*}
	Hence each $V_u$ is isometric on $\Sc{K}_0$ and leaves $\Sc{N}$ invariant. It follows that we can extend
	$V_u$ to an isometry on $\Sc{K}$ and we have that $\{V_s\}_{s\in\Sc{S}}$ is an isometric representation of $\Sc{S}$.
	
	Further, note that for
	$g\in\Sc{G}$ and $h,k\in\Sc{H}$ we have
	\begin{align*}
		\<V_g\hat{h},\hat{k}\>&=\<V_{g_-}^*V_{g_+}\hat{h},\hat{k}\>=\<V_{g_+}\hat{h},V_{g_-}\hat{k}\>\\
		&=\sum_{s,t\in\Sc{S}}\<T_{t-s}\hat{h}(t-g_+),\hat{k}(s-g_-)\>\\
		&=\<T_g h,k\>.
	\end{align*}
	Thus we have $P_{\Sc{H}}V_g|_{\Sc{H}}=T_g$ for all $g\in\Sc{G}$. In particular $\{V_s\}_{s\in\Sc{S}}$ is an isometric
	dilation of $\{T_s\}_{s\in\Sc{S}}$. It is easily seen to be a minimal isometric dilation.
	Dilation with the property that $P_{\Sc{H}}V_g|_{\Sc{H}}=T_g$ are called a \emph{regular} dilations. We want to
	show that this dilation is Nica-covariant.
	
	Next we will show that if we have $s\in\Sc{S}_i$ and $\mu\in\Sc{S}$ such that $s\wedge\mu=0$ then $V_s^*V_\mu|_\Sc{H}=V_\mu V_s^*|_{\Sc{H}}$. Take $s,\mu$ as described,
	 $\nu\in\Sc{S}$ and $h,k\in\Sc{H}$. By the minimality of the dilation it suffices to show that
	\begin{equation*}
		\<V_s^* V_\mu\hat{h},V_\nu\hat{k}\>=\<V_\mu V_s^*\hat{h},V_\nu\hat{k}\>.
	\end{equation*}
	We calculate
	\begin{align*}
		\<V_s^* V_\mu\hat{h},V_\nu\hat{k}\>&=\<V_\nu^*V_s^* V_\mu\hat{h},\hat{k}\>\\
		&=\<V_{(\mu-\nu-s)_-}^*V_{(\mu-\nu-s)_+}\hat{h},\hat{k}\>\\
		&=\<T_{(\mu-\nu-s)_-}^*T_{(\mu-\nu-s)_+}\hat{h},\hat{k}\>.
	\end{align*}
	Note that, by our choice of $s$ and $\mu$ we have that $(\mu-\nu-s)_+=(\mu-\nu)_+$ and
	$(\mu-\nu-s)_-=s+(\mu-\nu)_-$. Also
	$s\wedge(\mu-\nu)_+=0$. Thus
	\begin{align*}
		\<V_s^* V_\mu\hat{h},V_\nu\hat{k}\>&=\<T_{(\mu-\nu-s)_-}^*T_{(\mu-\nu-s)_+}\hat{h},\hat{k}\>\\
		&=\<T_{s+(\mu-\nu)_-}^*T_{(\mu-\nu)_+}\hat{h},\hat{k}\>\\
		&=\<T_{(\mu-\nu)_-}^*T_{(\mu-\nu)_+}T_s^*\hat{h},\hat{k}\>\\
		&=\<P_{\Sc{H}}V_{(\mu-\nu)_-}^*V_{(\mu-\nu)_+}P_{\Sc{H}}V_s^*\hat{h},\hat{k}\>\\
		&=\<V_{(\mu-\nu)_-}^*V_{(\mu-\nu)_+}V_s^*\hat{h},\hat{k}\>=\<V_\mu V_s^*\hat{h},V_\nu\hat{k}\>.
	\end{align*}
	This tells us that the representation $V$ has the Nica-covariant property when restricted to $\Sc{H}$. We will now extend this to all of $\Sc{K}$.
	
	By the minimality of the representation $V$ it suffices to show that for
	$s\in S_i$, $t\in S_j$ where $i\neq j$, $\mu,\nu\in\Sc{S}$ and $h,k\in\Sc{H}$ that
	\begin{equation*}
		\<V_s^*V_tV_\mu\hat{h},V_\nu\hat{k}\>=\<V_tV_s^*V_\mu\hat{h},V_\nu\hat{k}\>.
	\end{equation*}
	The right-hand side of the above is
	\begin{align*}
		\<V_tV_s^*V_\mu\hat{h},V_\nu\hat{k}\>&=\<V_\nu^*V_tV_s^*V_\mu\hat{h},\hat{k}\>\\
		&=\<V_\nu^*V_tV_{(\mu-s)_-}^*V_{(\mu-s)_+}\hat{h},\hat{k}\>\\
		&=\<V_\nu^*V_tV_{(\mu-s)_+}V^*_{(\mu-s)_-}\hat{h},\hat{k}\>.
	\end{align*}
	Note that $t+(\mu-s)_+=(t+\mu-s)_+$ and $(\mu-s)_-=(t+\mu-s)_-$, hence we have
	\begin{align*}
		\<V_tV_s^*V_\mu\hat{h},V_\nu\hat{k}\>&=\<V^*_\nu V^*_{(t+\mu-s)_-}V_{(t+\mu-s)_+}\hat{h},\hat{k}\>\\
		&=\<V^*_{\nu+(t+\mu-s)_-}V_{(t+\mu-s)_+}\hat{h},\hat{k}\>\\
		&=\<V^*_{(t+\mu-\nu-s)_-}V_{(t+\mu-\nu-s)_+}\hat{h},\hat{k}\>,
	\end{align*}
	with the last equality coming from the fact that
	\begin{equation*}
		((t+\mu-s)_+-(t+\mu-s)_--\nu)_-=(t+\mu-s-\nu)_-
	\end{equation*}
	and
	\begin{equation*}
		((t+\mu-s)_+-(t+\mu-s)_--\nu)_+=(t+\mu-s-\nu)_+.
	\end{equation*}
	Hence
	\begin{align*}
		\<V_tV_s^*V_\mu\hat{h},V_\nu\hat{k}\>
		&=\<V^*_{(t+\mu-\nu-s)_-}V_{(t+\mu-\nu-s)_+}\hat{h},\hat{k}\>\\
		&=\<V_\nu^*V_s^*V_tV_\mu\hat{h},\hat{k}\>\\
		&=\<V_s^*V_tV_\mu\hat{h},V_\nu\hat{k}\>.
	\end{align*}
	Hence $V$ is Nica-covariant.
	
	To show that the dilation is unique we follow a standard argument. Suppose $V$ and $W$ are two minimal isometric Nica-covariant dilations of $T$ on
	$\Sc{K}_1$ and $\Sc{K}_2$ respectively. Take $h_1, h_2\in\Sc{H}$
	and $\nu,\mu\in\Sc{S}$. Then
	\begin{align*}
		\<V_\mu h_1,V_\nu h_2\>&=\<V_\nu^*V_\mu h_1,h_2\>\\
		&=\<V_{(\mu-\nu)_-}^*V_{(\mu-\nu)_+} h_1,h_2\>\\
		&=\<T_{\mu-\nu}h_1,h_2\>.
	\end{align*}
	Similarly $\<W_\mu h_1,W_\nu h_2\>=\<T_{\mu-\nu}h_1,h_2\>$. Thus the map $U:V_\nu h\mapsto W_\nu h$ extends to a unitary from $\Sc{K}_1$ to $\Sc{K}_2$ which 
	fixes $\Sc{H}$, and the two dilations $V$ and $W$ are unitarily equivalent.
\end{proof}

\section{Semicrossed product algebras}
Throughout let $\Sc{S}$ be the semigroup $\Sc{S}=\sum_{i=1}^{\oplus k}\Sc{S}_i$ where each $\Sc{S}_i$ is a countable subsemigroup of $\B{R}_+$ containing $0$. Further we suppose that $\Sc{S}$ is the positive cone of the group $\Sc{G}$ generated by $\Sc{S}$.

\begin{definition}
	Let $\Sc{A}$ be a unital operator algebra. If $\alpha=\{\alpha_{s}:\ s\in\Sc{S}\}$ is a family of completely isometric unital endomorphisms of $\Sc{A}$ forming an action 
	of $\Sc{S}$ on $\Sc{A}$ then we
	call the triple $(\Sc{A},\Sc{S},\alpha)$ a \emph{semigroup dynamical system}.
\end{definition}

\begin{definition}
	Let $(\Sc{A},\Sc{S},\alpha)$ be a semigroup dynamical system. An \emph{isometric (contractive) Nica-covariant representation} of $(\Sc{A},\Sc{S},\alpha)$ on a Hilbert space $\Sc{H}$ 
	consists of a pair $(\sigma, V)$ where $\sigma$ is a completely contractive representation $\sigma:\Sc{A}\rightarrow\Sc{B}(\Sc{H})$ and $V=\{V_s\}_{s\in\Sc{S}}$ is an isometric (contractive)
	Nica-covariant representation of $\Sc{S}$ on $\Sc{H}$ such that
	\begin{equation*}
		\sigma(A)V_s=V_s\sigma(\alpha_s(A))
	\end{equation*}
	for all $A\in\Sc{A}$ and $s\in\Sc{S}$.
\end{definition}

We will be interested in two nonself-adjoint semicrossed product algebras associated to a semigroup dynamical system $(\Sc{A},\Sc{S},\alpha)$. We define $\cross$ to be the universal algebra for all contractive Nica-covariant representations of $(\Sc{A},\Sc{S},\alpha)$ and $\isocross$ to be the universal algebra for all isometric Nica-covariant representations of $(\Sc{A},\Sc{S},\alpha)$.

The algebras $\Sc{A}\times_\alpha^{iso}\B{Z}_+$ were introduced by Kakariadis and Katsoulis \cite{kakkat} and have proven to be a more tractable class of algebras than $\Sc{A}\times_\alpha\B{Z}$. While in general one expects $\cross$ and $\isocross$ to be different there are times when the two algebras coincide. For example, when $\Sc{A}=\F{A}_n$ is the noncommutative disc algebra and $\Sc{S}=\B{Z}_+$ it follows
from \cite{DavKat3} that
	\begin{equation*}
		\F{A}_n\times_\alpha^{iso}\B{Z}_+\cong\F{A}_n\times_\alpha\B{Z}_+.
	\end{equation*}
Further examples of when the semicrossed product and the isometric semicrossed product are the same for the case $\Sc{S}=\B{Z}_+$ can be found in \cite[Section 12]{DavKat1}.
When $\Sc{A}$ is a unital $C^*$-algebra we will see (Corollary \ref{same alg}) that
	\begin{equation*}
		\isocross\cong\cross.
	\end{equation*}

Let $\Sc{P}(\Sc{A},\Sc{S})$ be the algebra of all formal polynomials $p$ of the form
	\begin{equation*}
		p=\sum_{i=1}^n\Sc{V}_{s_i}A_{s_i}
	\end{equation*}
where $s_1,\ldots,s_n$ are in $\Sc{S}$, with multiplication defined by $A\Sc{V}_s=\Sc{V}_s\alpha(A)$. If $(\sigma, T)$ is a contractive Nica-covariant representation of $(\Sc{A},\Sc{S},\alpha)$ then we can define a representation $\sigma\times T$ 
of $\Sc{P}(\Sc{A},\Sc{S})$ by
	\begin{equation*}
		(\sigma\times T)\left(\sum_{i=1}^n\Sc{V}_{s_i}A_{s_i}\right)=\sum_{i=1}^n T_{s_i}\sigma(A_{s_i}).
	\end{equation*}
We define two norms on $\Sc{P}(\Sc{A},\Sc{S})$ as follows. For $p\in\Sc{P}(\Sc{A},\Sc{S})$ let
	\begin{equation*}
		\|p\|=\sup_{\substack{(\sigma,T)\text{ contractive}\\\text{Nica-covariant}}}\Big\{(\sigma\times T)(p)
			\Big\}
	\end{equation*}
and
	\begin{equation*}
		\|p\|_{iso}=\sup_{\substack{(\sigma,V)\text{ isometric}\\\text{Nica-covariant}}}\Big\{(\sigma\times V)(p)
			\Big\}.
	\end{equation*}
We can realise our semicrossed product algebras as
	\begin{equation*}
		\cross=\overline{\Sc{P}(\Sc{A},\Sc{S})}^{\|\cdot\|}
	\end{equation*}
and
	\begin{equation*}
		\isocross=\overline{\Sc{P}(\Sc{A},\Sc{S})}^{\|\cdot\|_{iso}}.
	\end{equation*}

If $(\Sc{B},\Sc{G},\beta)$ is a dynamical system where $\beta$ is an action of the group $\Sc{G}$ on the $C^*$-algebra
$\Sc{B}$ by automorphisms there is an adjoint operation on $\Sc{P}(\Sc{B},\Sc{G})$ given by
$(\Sc{V}_gB)^*:=\Sc{V}_{-g}\beta_{g}^{-1}(B^*)$. If $(\pi,U)$ is covariant representation of $(\Sc{B},\Sc{G},\beta)$, then $\{U_s\}_{s\in\Sc{S}}$ is necessarily a family of
commuting unitaries, and hence $\{U_s\}_{s\in\Sc{S}}$ is automatically Nica-covariant.

\begin{example}\label{induced rep}
	Let $(\Sc{A},\Sc{S},\alpha)$ be a semigroup dynamical system. Let $\sigma$ be a completely contractive representation of $\Sc{A}$ on a Hilbert space $\Sc{H}$. Define
	a completely contractive representation $\tilde{\sigma}$ of $\Sc{A}$ on $\Sc{H}\otimes\ell^2(\Sc{S})$ by
	\begin{equation*}
		\tilde{\sigma}(A)(h_s)_{s\in\Sc{S}}=(\sigma(\alpha_s(A))h_s)_{s\in\Sc{S}}
	\end{equation*}
	for all $A\in\Sc{A}$ and $(h_s)_{s\in\Sc{S}}\in\Sc{H}\otimes\ell^2(\Sc{S})$.
	
	For each $s\in\Sc{S}$ define an operator $W_s$ on $\Sc{H}\otimes\ell^2(\Sc{S})$ by
	\begin{equation*}
		W_s(h)_t=(h)_{s+t},
	\end{equation*}
	where $h\in\Sc{H}$ and $(h)_s\in\Sc{H}\otimes\ell^2(\Sc{S})$ is the vector with $h$ in the $s^{th}$ position and $0$ everywhere else. Then $(\tilde{\sigma},W)$ is an isometric
	Nica-covariant representation of $(\Sc{A},\Sc{S},\alpha)$.
	
	Note that in the case where each $\alpha_s$ is an automorphism on $\Sc{A}$ then we can extend this idea to give a Nica-covariant representation $(\hat{\sigma}, U)$ on
	$\Sc{H}\otimes\ell^2(\Sc{G})$ where each $U_s$ is unitary.
\end{example}

\begin{definition}
	The isometric Nica-covariant representation $(\tilde{\sigma},W)$ constructed above is called an \emph{induced representation} of $(\Sc{A},\Sc{S},\alpha)$.
\end{definition}

\subsection{Dilations of Nica-covariant representations}

We now consider some dilation results for Nica-covariant representations of a semigroup dynamical system $(\Sc{A},\Sc{S},\alpha)$ in the case when $\Sc{A}$ is a $C^*$-algebra.

In the case that $\Sc{S}=\B{Z}_+^k$ the following theorem is a special case of a theorem of Solel's \cite[Theorem 3.1]{Solel} which deals with representations of product systems of $C^*$-correspondences. The result has also been shown by Ling and Muhly \cite{LingMuhly} for the case $\Sc{S}=\B{Z}_+^k$ and $\alpha$ is an action on $\Sc{A}$ by automorphisms. 

\begin{theorem}\label{min iso dil}
	Let $\Sc{S}=\sum_{i\in I}^{\oplus}\Sc{S}_i$ where each $\Sc{S}_i$ is a countable subsemigroup of $\B{R}_+$ containing $0$ and
	let $(\Sc{A},\Sc{S},\alpha)$ be a semigroup dynamical system where $\Sc{A}$ is a unital $C^*$-algebra. Let $(\sigma,T)$ be a contractive Nica-covariant representation of $(\Sc{A},\Sc{S},\alpha)$ on $\Sc{H}$.
	Then there is an isometric Nica-covariant representation $(\pi,V)$ of $(\Sc{A},\Sc{S},\alpha)$ on $\Sc{K}\supseteq\Sc{H}$ such that
	\begin{enumerate}
		\item $\pi(A)|_\Sc{H}=\sigma(A)$ for all $A\in\Sc{A}$
		\item $P_\Sc{H}V_s|_\Sc{H}=T_s$ for all $s\in\Sc{S}$.
	\end{enumerate}
	Further $\Sc{K}$ is minimal in the sense that $\Sc{K}=\bigvee_{s\in\Sc{S}}V_s\Sc{H}$.
\end{theorem}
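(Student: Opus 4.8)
The plan is to keep the isometric Nica-covariant dilation $(V,\Sc{K})$ produced in Theorem \ref{dilation is Nica-cov} and to build the covariant representation $\pi$ of $\Sc{A}$ on the \emph{same} space. Recall from that construction that $\Sc{K}=\overline{\Sc{K}_0/\Sc{N}}$, that $\Sc{H}$ embeds via $h\mapsto\hat h$, that $V_s\hat h$ is the function supported at $s$ with value $h$, and that the dilation is regular and minimal; thus conclusion (2) (being a special case of $P_{\Sc{H}}V_g|_{\Sc{H}}=T_g$) and the minimality $\Sc{K}=\bigvee_s V_s\Sc{H}$ are already in hand. Since $\Sc{A}$ is a unital $C^*$-algebra and $\sigma$ is a unital completely contractive representation, $\sigma$ is a unital $*$-representation, and each $\alpha_s$ is a $*$-endomorphism. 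On $\Sc{K}_0$ I would define $\pi(A)$ by $(\pi(A)f)(s)=\sigma(\alpha_s(A))f(s)$, equivalently $\pi(A)V_s\hat h=V_s\widehat{\sigma(\alpha_s(A))h}$. A short formal check on $\Sc{K}_0$ then shows that $\pi$ is multiplicative, that $\pi(A)\hat h=\widehat{\sigma(A)h}$ (giving (1)), and that $\pi(A)V_s=V_s\pi(\alpha_s(A))$ for all $s$; the last identity uses only $\alpha_{t-s}\circ\alpha_s=\alpha_t$.

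The substance of the proof is to show that $\pi(A)$ is well-defined on the quotient and bounded, with $\|\pi(A)\|\le\|A\|$. Writing a general element of $\Sc{K}_0$ as $f=\sum_i V_{s_i}\hat h_i$ and using $\<V_\mu h,V_\nu k\>=\<T_{\mu-\nu}h,k\>$ established in the proof of Theorem \ref{dilation is Nica-cov}, one computes $\|f\|^2=\<N\mathbf{h},\mathbf{h}\>$ and $\|\pi(A)f\|^2=\<E^*NE\mathbf{h},\mathbf{h}\>$, where $\mathbf{h}=(h_i)$, $N=[T_{s_j-s_i}]_{i,j}$ and $E=\mathrm{diag}(\sigma(\alpha_{s_i}(A)))$. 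By Theorem \ref{dilation exists} we have $N\geq0$, so it suffices to prove the operator-matrix inequality $E^*NE\le\|A\|^2N$. This is the main obstacle, since it does \emph{not} follow from $\|E\|\le\|A\|$ alone: the estimate $D^*MD\le\|D\|^2M$ is false for general positive $M$, so one must genuinely exploit the covariance relations together with Nica-covariance to rewrite $\|A\|^2N-E^*NE$ as a congruence of the positive matrix $N$.

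To do this I would use both covariance relations $\sigma(B)T_s=T_s\sigma(\alpha_s(B))$ and its adjoint form $T_s^*\sigma(B)=\sigma(\alpha_s(B))T_s^*$. For fixed $i,j$ set $p=s_i\wedge s_j$, $a=s_i-p$ and $b=s_j-p$; then $a\wedge b=0$ and Nica-covariance gives $T_{s_j-s_i}=T_a^*T_b$. Moving diagonal entries across $T_a^*$ and $T_b$ with the two relations shows, for self-adjoint $D\in\Sc{A}$, that $\sigma(\alpha_p(D))T_a=T_a\sigma(\alpha_{s_i}(D))$ and $\sigma(\alpha_p(D))T_b=T_b\sigma(\alpha_{s_j}(D))$. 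Taking $C=\|A\|^2I-A^*A\ge0$ and $D=C^{1/2}$, and using $\sigma(\alpha_p(I))=I$, this yields the entrywise identity
\begin{equation*}
	\|A\|^2T_{s_j-s_i}-\sigma(\alpha_{s_i}(A))^*T_{s_j-s_i}\sigma(\alpha_{s_j}(A))=\sigma(\alpha_{s_i}(D))^*\,T_{s_j-s_i}\,\sigma(\alpha_{s_j}(D)),
\end{equation*}
so that $\|A\|^2N-E^*NE=F^*NF$ with $F=\mathrm{diag}(\sigma(\alpha_{s_i}(D)))$. Since $N\ge0$, this is positive, establishing $E^*NE\le\|A\|^2N$ and hence $\|\pi(A)f\|\le\|A\|\,\|f\|$.

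Finally, this estimate forces $\pi(A)$ to carry $\Sc{N}$ into $\Sc{N}$, so $\pi(A)$ descends to a bounded operator on $\Sc{K}$ with $\|\pi(A)\|\le\|A\|$. A computation of the same flavour (expanding $\<\pi(A)f,g\>$ and $\<f,\pi(A^*)g\>$ and invoking $T_{\mu-\nu}\sigma(\alpha_\mu(A))=\sigma(\alpha_\nu(A))T_{\mu-\nu}$ from the covariance relations) shows $\pi(A)^*=\pi(A^*)$, so $\pi$ is a unital $*$-representation and in particular completely contractive. Together with the isometric Nica-covariant $V$ and the covariance $\pi(A)V_s=V_s\pi(\alpha_s(A))$ verified in the first step, $(\pi,V)$ is the required isometric Nica-covariant representation satisfying (1), (2) and the minimality $\Sc{K}=\bigvee_{s\in\Sc{S}}V_s\Sc{H}$.
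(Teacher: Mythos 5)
Your proposal is correct, and its skeleton is the paper's: both keep the space $\Sc{K}=\overline{\Sc{K}_0/\Sc{N}}$ and the minimal isometric Nica-covariant dilation $V$ from Theorem \ref{dilation is Nica-cov}, define $\pi$ on finitely supported functions by $(\pi(A)f)(s)=\sigma(\alpha_s(A))f(s)$, and rest everything on the intertwining identity $T_{t-s}\sigma(\alpha_t(A))=\sigma(\alpha_s(A))T_{t-s}$, which the paper derives via the decomposition $t-s=(t-s)_+-(t-s)_-$ and you derive equivalently via $p=s_i\wedge s_j$, $a=s_i-p$, $b=s_j-p$. Where you genuinely diverge is the passage to the quotient: the paper verifies $\<\pi_0(A)f,g\>=\<f,\pi_0(A^*)g\>$ for the semidefinite form, concludes $\pi_0(A)\Sc{N}\subseteq\Sc{N}$, and then simply asserts that $\pi_0$ extends to a representation on $\Sc{K}$ --- implicitly invoking the standard fact that a unital $*$-representation with respect to a positive semidefinite form is automatically contractive, via $\|A\|^2 1-A^*A=D^*D$. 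Your operator-matrix inequality $E^*NE\leq\|A\|^2N$, proved by the congruence $\|A\|^2N-E^*NE=F^*NF$ with $F=\mathrm{diag}(\sigma(\alpha_{s_i}(D)))$ and $D=(\|A\|^2 1-A^*A)^{1/2}$, is exactly that GNS-type argument written out entrywise: $F$ plays the role of $\pi_0(D)$, and your entrywise identity encodes $\pi_0(\|A\|^2 1-A^*A)=\pi_0(D)^*\pi_0(D)$ relative to the form; I checked the identity and it is correct, using only the covariance relation, its adjoint form, and $T_{s_j-s_i}=T_a^*T_b$. What your route buys is an explicit quantitative proof of the boundedness that the paper leaves tacit, together with the apt observation that $E^*NE\leq\|E\|^2N$ fails for general positive $N$, so the covariance relations are genuinely needed at this step; what the paper's route buys is brevity, since $\Sc{N}$-invariance is obtained by a one-line orthogonality computation without any norm estimate. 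One cosmetic point: the paper's definition does not demand that $\sigma$ be unital, but a completely contractive homomorphism of a $C^*$-algebra is automatically a $*$-homomorphism (the paper itself uses $\sigma(\alpha_s(A))^*=\sigma(\alpha_s(A^*))$), so your unitality convention is harmless.
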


\begin{proof}
	Let $\Sc{K}_0$, $\Sc{K}$ and $\Sc{N}$ be as in the proof of Theorem \ref{dilation is Nica-cov}. For each $A\in\Sc{A}$ we define $\pi_0(A)$ on $\Sc{K}_0$ by
	\begin{equation*}
		(\pi_0(A)f)(s)=\sigma(\alpha_s(A))f(s),
	\end{equation*}
	for each $f\in\Sc{K}_0$ and $s\in\Sc{S}$. Note that, for $A\in\Sc{A}$ and $t,s\in\Sc{S}$ we have
	\begin{align*}
		T_{t-s}\sigma(\alpha_t(A))&=T_{(t-s)_+}T_{(t-s)_-}^*\sigma(\alpha_t(A))\\
		&=T_{(t-s)_+}\sigma(\alpha_{t+(t-s)_-}(A))T_{(t-s)_-}^*\\
		&=\sigma(\alpha_{t+(t-s)_- -(t-s)_+}(A))T_{(t-s)_-}^*T_{(t-s)_+}\\
		&=\sigma(\alpha_s(A))T_{t-s}.
	\end{align*}
	
	It follows that, if $f\in\Sc{N}$  and $g\in\Sc{K}_0$ then for each $A\in\Sc{A}$,
	\begin{align*}
		\<\pi_0(A)f,g\>&=\sum_{s,t}\<T_{t-s}\sigma(\alpha_t(A))f(t),g(s)\>\\
		&=\sum_{s,t}\<T_{t-s}f(t),\sigma(\alpha_s(A^*))g(s)\>=0,
	\end{align*}
	we thus can extend $\pi_0$ to a representation $\pi$
	\begin{equation*}
		\pi:\Sc{A}\rightarrow\Sc{B}(\Sc{K}).
	\end{equation*}
	It is easy to check that $(\pi,V)$ form a Nica-covariant representation with the desired properties.
\end{proof}

\begin{remark}
	In the case when $\Sc{S}=\sum_{i\in I}^{\oplus}\Sc{S}_i$ where each $\Sc{S}_i$ is a subsemigroup of $\B{R}_+$ containing $0$ and each $\Sc{S}_i$ has the extra		condition of being commensurable then the statement of Theorem \ref{min iso dil} is a special case of \cite[Theorem 4.2]{Shal2}. However, in the proof there, the only place where the 		commensurable condition is
	used is in ensuring that contractive Nica-covariant representation of $\Sc{S}$ has minimal Nica-covariant isometric dilation. As Theorem \ref{dilation exists} and Theorem
	\ref{dilation is Nica-cov} provide the existence of minimal Nica-covariant isometric dilations in the case when each $\Sc{S}_i$ is not necessarily commensurable the proof
	given in \cite{Shal2} provides an alternate proof of Theorem \ref{min iso dil}.
\end{remark}

\begin{corollary}\label{same alg}
	Let $\Sc{S}=\sum_{i=1}^{\oplus k}\Sc{S}_i$ where each $\Sc{S}_i$ is a countable subsemigroup of $\B{R}_+$ containing $0$ and
	let $(\Sc{A},\Sc{S},\alpha)$ be a semigroup dynamical system where $\Sc{A}$ is a unital $C^*$-algebra.
	Then the norms $\|\cdot\|$ and $\|\cdot\|_{iso}$ on $\Sc{P}(\Sc{A},\Sc{S})$ are the same. Hence
		\begin{equation*}
			\isocross=\cross.
		\end{equation*}
\end{corollary}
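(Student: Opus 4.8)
The plan is to prove the two norms agree by showing each dominates the other. One inequality is immediate: every isometric Nica-covariant representation $(\sigma,V)$ is in particular a contractive Nica-covariant representation, so the supremum defining $\|\cdot\|_{iso}$ runs over a subfamily of those defining $\|\cdot\|$, whence $\|p\|_{iso}\le\|p\|$ for every $p\in\Sc{P}(\Sc{A},\Sc{S})$. All the content lies in the reverse inequality $\|p\|\le\|p\|_{iso}$, and for this I would dilate and compress.

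First I would fix a contractive Nica-covariant representation $(\sigma,T)$ of $(\Sc{A},\Sc{S},\alpha)$ on $\Sc{H}$ and invoke Theorem \ref{min iso dil} to obtain an isometric Nica-covariant representation $(\pi,V)$ on some $\Sc{K}\supseteq\Sc{H}$ with $\pi(A)|_{\Sc{H}}=\sigma(A)$ for all $A\in\Sc{A}$ and $P_{\Sc{H}}V_s|_{\Sc{H}}=T_s$ for all $s\in\Sc{S}$. The first of these properties is the crucial one: it says that $\Sc{H}$ is an invariant subspace for $\pi(\Sc{A})$ on which $\pi$ restricts to $\sigma$. The key step is then to check that compressing $\pi\times V$ to $\Sc{H}$ returns $\sigma\times T$ on the nose. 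Since the covariance relation $A\Sc{V}_s=\Sc{V}_s\alpha_s(A)$ lets me write every $p$ in the normal form $\sum_i\Sc{V}_{s_i}A_{s_i}$, it suffices by linearity to treat a single monomial $\Sc{V}_sA$. For $h\in\Sc{H}$ I have $\pi(A)h=\sigma(A)h\in\Sc{H}$ by invariance, so $P_{\Sc{H}}V_s\pi(A)h=P_{\Sc{H}}V_s(\sigma(A)h)=T_s\sigma(A)h$, the last equality using $\sigma(A)h\in\Sc{H}$ together with $P_{\Sc{H}}V_s|_{\Sc{H}}=T_s$. Hence $P_{\Sc{H}}(\pi\times V)(p)|_{\Sc{H}}=(\sigma\times T)(p)$ for all $p$.

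With this identity in hand the estimate is routine: compression by $P_{\Sc{H}}(\cdot)|_{\Sc{H}}$ is contractive, so $\|(\sigma\times T)(p)\|\le\|(\pi\times V)(p)\|\le\|p\|_{iso}$, and taking the supremum over all contractive Nica-covariant $(\sigma,T)$ gives $\|p\|\le\|p\|_{iso}$. To upgrade this to the completely isometric identification $\isocross=\cross$ asserted by the corollary, I would run the same argument one matrix level at a time: amplify $(\pi,V)$ to $\Sc{K}\otimes\B{C}^n$, compress by $P_{\Sc{H}}\otimes I_n$, and use that $\sigma$ (hence the $\pi$ it induces) is completely contractive so that the monomial computation above goes through verbatim on $M_n(\Sc{P}(\Sc{A},\Sc{S}))$. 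I do not expect a genuine obstacle here, since Theorem \ref{min iso dil} has already done the hard work of producing a Nica-covariant isometric dilation; the only point demanding care is verifying that the compression recovers $\sigma\times T$ exactly, which rests on $\Sc{H}$ being honestly invariant (not merely semi-invariant) for $\pi(\Sc{A})$, a feature guaranteed by the equality $\pi(A)|_{\Sc{H}}=\sigma(A)$.
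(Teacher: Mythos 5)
Your proposal is correct and follows essentially the same route as the paper's proof: the inequality $\|p\|_{iso}\leq\|p\|$ is immediate, and the reverse inequality is obtained by dilating a contractive Nica-covariant representation $(\sigma,T)$ to an isometric Nica-covariant representation $(\pi,V)$ via Theorem \ref{min iso dil} and compressing to $\Sc{H}$. Your explicit verification that the compression recovers $\sigma\times T$ on monomials, and your remark on running the argument at each matrix level, merely spell out details the paper leaves implicit.
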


\begin{proof}
	Take any $p\in\Sc{P}(\Sc{A},\Sc{S})$.
	Since an isometric Nica-covariant representation is itself contractive it follows that $\|p\|_{iso}\leq\|p\|$. 
	Now take a contractive Nica-covariant representation $(\sigma,T)$ on a Hilbert space $\Sc{H}$. Let $(\pi, V)$
	be the minimal isometric Nica-covariant
	dilation of $(\sigma,T)$. Then
	\begin{equation*}
		\|(\sigma\times T)(p)\|=\|P_{\Sc{H}}(\pi\times V)(p)P_{\Sc{H}}\|\leq\|(\pi\times V)(p)\|.
	\end{equation*}
	Hence $\|p\|\leq\|p\|_{iso}$.
\end{proof}

\begin{remark}\label{tensor alg}
	Let $(\Sc{A},\Sc{S},\alpha)$ be a semigroup dynamical system. If $\Sc{A}$ is a $C^*$-algebra then 
	$(\Sc{A},\Sc{S},\alpha)$ can be used to describe a product system of $C^*$-correspondences over $\Sc{S}$. Fowler
	constructs a concrete $C^*$-algebra which is universal for Nica-covariant completely contractive representations of 		this product system \cite{Fowler}. It was observed by Solel \cite{Solel} that the nonself-adjoint Banach algebra
	formed by the left regular representation of the product system is universal for Nica-covariant completely
	contractive representations (while Solel was working in $\B{Z}_+^k$ the same reasoning works for countable $\Sc{S}$). 		Thus $\cross$ can also be realised as the concrete tensor algebra in the sense of Solel, see
	\cite[Corollary 3.17]{Solel}.
	
	Further, if $\sigma$ is a faithful representation of $\Sc{A}$ it follows that the induced representation $(\tilde{\sigma},W)$ is a completely isometric representation of $\cross$.
\end{remark}

The following theorem can be proved by a standard argument in dynamical systems using direct limits of $C^*$-algebras. As stated below, the result is a special case of 
\cite[Theorem 2.1]{laca} and \cite[Section 2]{murph}.

\begin{theorem}\label{min auto dil}
	Let $(\Sc{A},\Sc{S},\alpha)$ be a semigroup dynamical system where $\Sc{A}$ is a $C^*$-algebra and each $\alpha_s$ is injective. Then there exists a
	$C^*$-dynamical system $(\Sc{B},\Sc{G},\beta)$
	where each $\beta_s$ is an automorphism, 
	unique up to isomorphism, together with an embedding $i:\Sc{A}\rightarrow\Sc{B}$ such that
	\begin{enumerate}
		\item $\beta_s\circ i=i\circ\alpha_s$, i.e. $\beta$ dilates $\alpha$
		\item $\bigcup_{s\in\Sc{S}}\beta_s^{-1}(i(\Sc{A}))$ is dense in $\Sc{B}$, i.e. $\Sc{B}$ is minimal.
	\end{enumerate}
\end{theorem}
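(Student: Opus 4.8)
The plan is to construct $\Sc{B}$ as a $C^*$-algebraic direct limit and to let $\beta$ be the shift induced by the semigroup structure. I would order $\Sc{S}$ by $s\le t$ iff $t-s\in\Sc{S}$; since $\Sc{S}$ is the positive cone of $\Sc{G}$ this order is directed (any $s,t$ are dominated by $s+t$). For each $s\in\Sc{S}$ put $\Sc{A}_s=\Sc{A}$, and for $s\le t$ let the connecting map $\varphi_{s,t}\colon\Sc{A}_s\to\Sc{A}_t$ be $\alpha_{t-s}$. Because $\alpha$ is an action, $\varphi_{t,u}\circ\varphi_{s,t}=\alpha_{u-t}\circ\alpha_{t-s}=\alpha_{u-s}=\varphi_{s,u}$, so this is a genuine directed system of $C^*$-algebras. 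I would then set $\Sc{B}=\varinjlim(\Sc{A}_s,\varphi_{s,t})$ with canonical $*$-homomorphisms $k_s\colon\Sc{A}\to\Sc{B}$ satisfying $k_t\circ\alpha_{t-s}=k_s$. Here the hypothesis that each $\alpha_s$ is injective enters decisively: injective $*$-homomorphisms between $C^*$-algebras are isometric, so the connecting maps are isometric and hence every $k_s$ is an isometric embedding. I then define $i:=k_0$, which is the required embedding.

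Next I would build the group action. Writing a general $g\in\Sc{G}$ as $g=a-b$ with $a,b\in\Sc{S}$, for any $s\in\Sc{S}$ I can choose $s'\ge s$ with $s'\ge a$, so that $s'-g=(s'-a)+b\in\Sc{S}$; this is all that directedness of $\Sc{S}$ provides, and no lattice structure beyond $\Sc{G}=\Sc{S}-\Sc{S}$ is needed. I would define $\beta_g$ on the dense subalgebra $\bigcup_s k_s(\Sc{A})$ by $\beta_g(k_s(x))=k_{s'-g}(\alpha_{s'-s}(x))$. The crux of the argument is to show that this is independent of the chosen representative $k_s(x)$ and of the auxiliary element $s'$; once that is done, the defining relation takes the clean form $\beta_g\circ k_s=k_{s-g}$ whenever $s-g\in\Sc{S}$, from which $\beta_g\beta_h=\beta_{g+h}$ and $\beta_0=\mathrm{id}$ follow immediately. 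Consequently each $\beta_g$ is a $*$-automorphism of the dense subalgebra with inverse $\beta_{-g}$, and it extends by continuity to an automorphism of $\Sc{B}$, giving a homomorphism $\beta\colon\Sc{G}\to\mathrm{Aut}(\Sc{B})$.

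The two listed properties are then short computations. For (i), since $k_0=k_s\circ\alpha_s$, I obtain $\beta_s(i(x))=\beta_s(k_s(\alpha_s(x)))=k_0(\alpha_s(x))=i(\alpha_s(x))$, i.e.\ $\beta_s\circ i=i\circ\alpha_s$. For (ii), $\beta_s^{-1}(i(\Sc{A}))=\beta_{-s}(k_0(\Sc{A}))=k_s(\Sc{A})$, and $\bigcup_s k_s(\Sc{A})$ is dense in the direct limit by construction, so $\bigcup_s\beta_s^{-1}(i(\Sc{A}))$ is dense in $\Sc{B}$.

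Finally, for uniqueness I would take another system $(\Sc{B}',\Sc{G},\beta')$ with embedding $i'$ satisfying (i) and (ii), and define $\psi\colon\Sc{B}\to\Sc{B}'$ on the dense subalgebra by $\psi(k_s(x))=(\beta'_s)^{-1}(i'(x))$. Using (i) for $\beta'$ in the form $(\beta'_t)^{-1}\beta'_{t-s}=(\beta'_s)^{-1}$, one checks that $\psi$ respects the identification $k_s(x)=k_t(\alpha_{t-s}(x))$, so it is well defined; it is an injective, hence isometric, $*$-homomorphism, its range is the dense set $\bigcup_s(\beta'_s)^{-1}(i'(\Sc{A}))$ so it is onto, and it intertwines $i$ with $i'$ (take $s=0$) and $\beta$ with $\beta'$. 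I expect the only genuinely delicate point of the whole argument to be the representative-independence of $\beta_g$ in the second paragraph; every other step is routine direct-limit bookkeeping, and the injectivity hypothesis is used only to guarantee that $i$ (equivalently each $k_s$) is isometric.
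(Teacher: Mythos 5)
Your proposal is correct, and it is exactly the argument the paper has in mind: the paper does not write out a proof but appeals to ``a standard argument in dynamical systems using direct limits of $C^*$-algebras'' (citing Laca and Murphy), which is precisely your construction --- the direct limit over $(\Sc{S},\le)$ with connecting maps $\alpha_{t-s}$ (isometric because injective $*$-homomorphisms are isometric), the induced automorphisms $\beta_g$ determined by $\beta_g\circ k_s=k_{s-g}$, and uniqueness via the map $k_s(x)\mapsto(\beta'_s)^{-1}(i'(x))$. Your well-definedness checks and the verification of properties (i) and (ii) are sound, so nothing further is needed.
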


\begin{definition}
	Let $(\Sc{A},\Sc{S},\alpha)$ and $(\Sc{B},\Sc{G},\beta)$ be as in Theorem \ref{min auto dil}, then we call $(\Sc{B},\Sc{G},\beta)$ the \emph{minimal automorphic dilation} of
	$(\Sc{A},\Sc{S},\alpha)$.
\end{definition}

The minimal automorphic dilation of a dynamical system is frequently utilised in the literature. Group crossed product $C^*$-algebras have a long history and are well understood objects. Thus it is beneficial if one can relate a semicrossed algebra to a crossed product algebra, often the crossed product algebra of the minimal automorphic dilation. We will see in Theorem \ref{c* env} that the minimal automorphic dilation plays an important role when calculating the $C^*$-envelope of crossed product algebras.  First we will show now that $\cross$ sits nicely inside
$\fullcross$. In the case where $\Sc{S}=\B{Z}_+$ the following has been shown by Kakariadis and Katsoulis \cite{kakkat} and Peters \cite{peters1}.

\begin{theorem}\label{subalgebra}
	Let $(\Sc{A},\Sc{S},\alpha)$ be a semigroup dynamical system where $\Sc{A}$ is a $C^*$-algebra and each $\alpha_s$ is injective. Let $(\Sc{B},\Sc{G},\beta)$ be the minimal
	automorphic dilation of $(\Sc{A},\Sc{S},\alpha)$. The $\cross$ is completely isometrically isomorphic to a subalgebra of $\fullcross$. 

	Further, $\isocross$ generates $\fullcross$ as a $C^*$-algebra.
\end{theorem}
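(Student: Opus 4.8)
The plan is to produce an explicit completely isometric copy of $\cross$ inside $\fullcross$ and then show the $C^*$-algebra it generates exhausts $\fullcross$; throughout I would use that $\isocross=\cross$ by Corollary \ref{same alg}.

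First I would fix a faithful representation $\sigma_0$ of $\Sc{B}$ on $\Sc{H}_0$ and form the regular representation $(\hat\sigma,U)$ of $(\Sc{B},\Sc{G},\beta)$ on $\Sc{H}_0\otimes\ell^2(\Sc{G})$ as in the automorphic case of Example \ref{induced rep}, with $(\hat\sigma(B)\xi)(g)=\sigma_0(\beta_g(B))\xi(g)$, $(U_s\xi)(g)=\xi(g-s)$, covariance $\hat\sigma(B)U_g=U_g\hat\sigma(\beta_g(B))$, and each $U_g$ unitary. Since $\Sc{G}$ is a countable subgroup of $\B{R}^k$ it is abelian, hence amenable, so the full and reduced crossed products coincide and $(\hat\sigma,U)$ represents $\fullcross$ faithfully. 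Composing with $i\colon\Sc{A}\to\Sc{B}$ and restricting the unitaries to $\Sc{S}$ gives the pair $(\hat\sigma\circ i,\,U|_\Sc{S})$; using $\beta_s\circ i=i\circ\alpha_s$ one checks the covariance $\hat\sigma(i(A))U_s=U_s\hat\sigma(i(\alpha_s(A)))$, and the $U_s$ are commuting unitaries, hence automatically Nica-covariant. This is therefore an isometric Nica-covariant representation, which by the universal property of $\cross=\isocross$ yields a completely contractive homomorphism $\Phi\colon\cross\to\fullcross$ with $\Phi(\Sc{V}_sA)=U_s\hat\sigma(i(A))$.

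The hard part is showing $\Phi$ is completely isometric, for which I would exhibit the induced representation of $\cross$ as a corner of the regular representation of $\fullcross$. Both $\hat\sigma(i(A))$ and the $U_s$ ($s\in\Sc{S}$) leave $\Sc{M}:=\Sc{H}_0\otimes\ell^2(\Sc{S})$ invariant, so $\Sc{M}$ is invariant for the nonself-adjoint algebra $\Phi(\cross)$ and compression to $\Sc{M}$ is a completely contractive homomorphism. A short calculation shows $U_s|_\Sc{M}=W_s$ and $(\hat\sigma(i(A))\xi)(s)=\sigma'(\alpha_s(A))\xi(s)$ for $s\in\Sc{S}$, where $\sigma':=\sigma_0\circ i$, so the restriction of $\Phi$ to $\Sc{M}$ is exactly the induced representation $(\tilde{\sigma'},W)$ of $(\Sc{A},\Sc{S},\alpha)$. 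Since $i$ is injective and $\sigma_0$ is faithful, $\sigma'$ is faithful, so by Remark \ref{tensor alg} this induced representation is completely isometric on $\cross$. Passing to matrix amplifications then gives $\|[\Phi(p_{kl})]\|\ge\|[P_\Sc{M}\Phi(p_{kl})|_\Sc{M}]\|=\|[p_{kl}]\|$, and combined with complete contractivity this proves $\Phi$ is completely isometric, establishing the first assertion.

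For the second assertion I would show $C^*(\Phi(\cross))=\fullcross$. The image contains each $U_s$ (take $A=1$) and each $\hat\sigma(i(A))$ (take $s=0$); since $\Sc{S}$ is the positive cone of $\Sc{G}$, writing $g=s-t$ gives $U_g=U_sU_t^*\in C^*(\Phi(\cross))$ for all $g\in\Sc{G}$. Rewriting the covariance as $\hat\sigma(\beta_s^{-1}(i(A)))=U_s\hat\sigma(i(A))U_s^*$ shows $\hat\sigma\bigl(\beta_s^{-1}(i(\Sc{A}))\bigr)\subseteq C^*(\Phi(\cross))$ for every $s\in\Sc{S}$; by the minimality clause of Theorem \ref{min auto dil} the union $\bigcup_{s\in\Sc{S}}\beta_s^{-1}(i(\Sc{A}))$ is dense in $\Sc{B}$, whence $\hat\sigma(\Sc{B})\subseteq C^*(\Phi(\cross))$. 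As $\hat\sigma(\Sc{B})$ together with the $U_g$ generate $\fullcross$, equality follows. The only genuine subtlety in the whole argument lies in the complete-isometry step: one must invoke amenability of $\Sc{G}$ so that the regular representation really computes the $\fullcross$-norm, and must verify that $\Sc{M}$ is invariant (not merely semi-invariant) for the nonself-adjoint algebra so that restriction is multiplicative and the matrix norms transfer; keeping the $\alpha$-, $\beta$- and $U$-covariance conventions aligned is where any sign error would most easily surface.
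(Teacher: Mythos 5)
Your proof is correct and follows essentially the same route as the paper: embed $\cross$ into the regular (induced) representation of $\fullcross$, use amenability of $\Sc{G}$ (Pedersen's Theorem 7.7.5) for faithfulness, identify the restriction to $\Sc{H}_0\otimes\ell^2(\Sc{S})$ with the completely isometric induced representation of Remark \ref{tensor alg}, and obtain generation from the covariance relation $\hat\sigma(\beta_s^{-1}(i(A)))=U_s\hat\sigma(i(A))U_s^*$ together with minimality. The only (harmless) deviation is that you start from a faithful representation of $\Sc{B}$ and restrict to $\Sc{A}$, whereas the paper starts from a faithful representation of $\Sc{A}$ and extends to $\Sc{B}$ via \cite[Proposition 4.1.8]{ped}, which obliges it to verify faithfulness of $\hat{\pi}$ separately; your choice makes that step automatic, and you also usefully spell out the generation argument the paper leaves terse.
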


\begin{proof}
	Let $\sigma$ be a faithful representation of $\Sc{A}$ on $\Sc{H}$.
	Then the induced representation $\tilde{\sigma}\times W$ is a completely isometric representation of $\cross$, by Remark \ref{tensor alg}. We will 
	embed this completely isometric copy of $\cross$ into a completely isometric representation of $\fullcross$ by suitably dilating the representation $(\tilde{\sigma},W)$.
	
	Let $i$ be the embedding of $\Sc{A}$ into $\Sc{B}$ as in Theorem \ref{min auto dil}. The representation $\sigma$ also defines a faithful representation of $i(\Sc{A})$,
	which we will also denote by $\sigma$. We can 
	thus find a representation $\pi$ of $\Sc{B}$ on $\Sc{K}\supseteq\Sc{H}$ such that $\pi(A)|_{\Sc{H}}=\sigma(i(A))$ for all $A\in\Sc{A}$, see e.g. \cite[Proposition 4.1.8]{ped}.
	We thus have an induced representation $\hat{\pi}\times U$ of $\fullcross$. Restricting $\pi$ to $\Sc{A}$ we see that $(\hat{\pi}\circ i)\times U$ is a completely isometric
	representation of $\cross$, since $\tilde{\sigma}\times W$ is. Further note that  $\hat{\pi}$ is faithful on $\bigcup_{s\in\Sc{S}}\beta_s^{-1}(\Sc{A})$. By the construction
	of $\Sc{B}$, $\hat{\pi}$ is also faithful representation of $\Sc{B}$. Now, by \cite[Theorem 7.7.5]{ped}, $\tilde{\sigma}\times W$ is a faithful representation of $\fullcross$. Hence
	$\cross$ sits completely isometrically inside $\fullcross$.
	
	That $\cross$ generates $\fullcross$ as a $C^*$-algebra follows immediately after considering the algebra
	$\Alg\{\Sc{P}(\Sc{A},\Sc{S}),(\Sc{P}(\Sc{A},\Sc{S}))^*\}$ inside $\Sc{P}(\Sc{B},\Sc{G})$. 
\end{proof}

\subsection{$C^*$-Envelopes}
Our goal in this subsection is to calculate the $C^*$-envelope of $\isocross$ in the case when $\alpha$ is a family of completely isometric automorphisms on a unital operator algebra $\Sc{A}$. 

If $\Sc{C}$ is a $C^*$-algebra which completely isometrically contains $\Sc{A}$ such that $\Sc{C}=C^*(\Sc{A})$ then we call
$\Sc{C}$ a \emph{$C^*$-cover of
$\Sc{A}$}. If $\Sc{A}$ is a $C^*$-algebra, Theorem \ref{subalgebra} says that $\fullcross$ is a $C^*$-cover of $\isocross$ when $(\Sc{B},\Sc{G},\beta)$ is the minimal automorphic dilation of $(\Sc{A},\Sc{S},\alpha)$. 

\begin{definition}
	Let $\Sc{A}$ be an operator algebra and let $\Sc{C}$ be a $C^*$-cover of $\Sc{A}$. Let $\alpha$ define an action of
	$\Sc{S}$ on $\Sc{C}$ by faithful $^*$-endomorphisms which leave $\Sc{A}$ invariant. We define the \emph{relative semicrossed 	 product} $\relcross$ to
	be the subalgebra of $\Sc{C}\sideset{_N}{_\alpha}{\mathop{\times}}\Sc{S}$ generated by the natural copy of $\Sc{A}$ inside
	$\Sc{C}\sideset{_N}{_\alpha}{\mathop{\times}}\Sc{S}$ and the universal isometries $\{\Sc{V}_s\}_{s\in\Sc{S}}$.
\end{definition} 

The idea of a relative semicrossed product was introduced by Kakariadis and Katsoulis \cite{kakkat} when studying semicrossed products by the semigroup $\B{Z}_+$. The key idea is to realise the universal algebra 
$\isocross$ as a relative semicrossed algebra. This allows a concrete place in which to try and discover the $C^*$-envelope.

The proof of the following proposition follows the same reasoning as the proof of \cite[Proposition 2.3]{kakkat}. It is an application of Dritschel and McCullough's \cite{DritMcCul} result that any representation can be dilated to a maximal representation and Muhly and Solel's \cite{muhlysolel1} result that any maximal representation extends to a $^*$-representation of any $C^*$-cover.

It is also important to note that if $\alpha$ is an action of $\Sc{S}$ on an operator algebra $\Sc{A}$ by completely isometric automorphisms  which extend to completely isometric automorphisms of a $C^*$-cover $\Sc{C}$ of
$\Sc{A}$, then each $\alpha_s$ necessarily leaves the Shilov boundary $\Sc{J}$ of $\Sc{A}$ in $\Sc{C}$ invariant, see e.g. 
\cite[Proposition 10.6]{DavKat1}. We will write $\{\dot{\alpha_s}\}_{s\in\Sc{S}}$ for the automorphisms on $\Sc{A}/\Sc{J}$ induced by the
automorphisms $\{\alpha_s\}_{s\in\Sc{S}}$ on $\Sc{A}$.

\begin{proposition}
	Let $\Sc{A}$ be an operator algebra and let $\Sc{C}$ be a $C^*$-cover of $\Sc{A}$. Let $\alpha$ be an action of
	$\Sc{S}$ on $\Sc{C}$ by automorphisms that restrict to automorphisms of $\Sc{A}$. Let $\Sc{J}$ be the Shilov 		boundary of $\Sc{A}$ in
	$\Sc{C}$. Then the relative semicrossed products $\relcross$ and
	$\Sc{A}/\Sc{J}\sideset{_N}{_{\Sc{C}/\Sc{J},\dot{\alpha}}}{\mathop{\times}}\Sc{S}$ are completely isometrically
	isomorphic.
\end{proposition}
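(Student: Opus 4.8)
The plan is to promote the canonical map induced by the quotient $q:\Sc{C}\to\Sc{C}/\Sc{J}$ to a complete isometry. Since each $\alpha_s$ leaves $\Sc{J}$ invariant, $q$ intertwines $\alpha$ with the induced automorphisms $\dot{\alpha}$, and so extends to a completely contractive homomorphism $\tilde q$ of the ambient isometric semicrossed product $\Sc{C} \sideset{_{N}}{_\alpha}{\mathop{\times}} \Sc{S}$ onto $\Sc{C}/\Sc{J} \sideset{_{N}}{_{\dot{\alpha}}}{\mathop{\times}} \Sc{S}$, sending $\Sc{V}_s\mapsto\Sc{V}_s$ and $c\mapsto q(c)$. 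Because $q|_{\Sc{A}}$ is completely isometric, the restriction $\Phi=\tilde q|_{\relcross}$ is the identity on $\Sc{P}(\Sc{A},\Sc{S})$, hence a completely contractive surjection of $\relcross$ onto $\Sc{A}/\Sc{J}\sideset{_{N}}{_{\Sc{C}/\Sc{J},\dot{\alpha}}}{\mathop{\times}} \Sc{S}$. The whole content is therefore the reverse inequality: that $\Phi$ is a complete isometry, i.e.\ $\|p\|_{\Sc{C}}\le\|p\|_{\Sc{C}/\Sc{J}}$ for every matrix of polynomials.

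For this I would argue by dilation. Fix a matrix of polynomials and a completely contractive representation of $\relcross$ attaining its norm to within $\varepsilon$. Since $\relcross$ carries the norm inherited from $\Sc{C} \sideset{_{N}}{_\alpha}{\mathop{\times}} \Sc{S}$, and a completely contractive unital homomorphism of the $C^*$-algebra $\Sc{C}$ is a $^*$-representation, this representation is the restriction to $\relcross$ of a covariant pair $(\pi,V)$ with $\pi$ a $^*$-representation of $\Sc{C}$ and $V$ an isometric Nica-covariant representation of $\Sc{S}$. Viewing $\Psi:=(\pi\times V)|_{\relcross}$ as a representation of the operator algebra $\relcross$, I would use Dritschel--McCullough to dilate $\Psi$ to a maximal representation $\Psi'$ on a larger space; as compression cannot increase norms, $\Psi'$ still attains the norm to within $\varepsilon$. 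By Muhly--Solel, $\Psi'$ extends to a $^*$-representation $\Pi$ of the $C^*$-cover $\Sc{E}:=C^*(\relcross)=C^*(\Sc{C},\{\Sc{V}_s\}_s)$. Writing $\pi'=\Pi|_{\Sc{C}}$ and $V'_s=\Pi(\Sc{V}_s)$, the defining relations of the semicrossed product force $(\pi',V')$ to be a covariant isometric Nica-covariant pair over $\Sc{C}$ whose restriction to $\relcross$ dilates $\Psi$. Everything then reduces to showing that $\pi'$ annihilates $\Sc{J}$.

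To see that $\pi'$ kills $\Sc{J}$ I would use the general fact that a maximal representation $\rho$ of $\Sc{A}$ has a unique $^*$-extension to $\Sc{C}$ that factors through $\Sc{C}/\Sc{J}=C^*_{env}(\Sc{A})$: the unique extension property makes the extension the only unital completely positive extension of $\rho$, while the Arveson extension theorem produces a completely positive extension through the quotient $\Sc{C}\to\Sc{C}/\Sc{J}$ (which is completely isometric on $\Sc{A}$); uniqueness forces the two to agree, so the extension vanishes on $\Sc{J}$. Applying this to $\rho:=\pi'|_{\Sc{A}}=\Psi'|_{\Sc{A}}$ — once its maximality is known — shows that $\pi'$ kills $\Sc{J}$. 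Then $(\pi',V')$ descends to a covariant isometric pair over $(\Sc{C}/\Sc{J},\dot{\alpha})$, that is, to a representation of the relative semicrossed product over $\Sc{C}/\Sc{J}$, and since it dilates the chosen representation of $\relcross$ we obtain $\|p\|_{\Sc{C}}\le\|p\|_{\Sc{C}/\Sc{J}}$ at every matrix level, completing the proof that $\Phi$ is a complete isometry.

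The main obstacle is exactly the step just invoked: that the restriction to $\Sc{A}$ of a \emph{maximal} representation of $\relcross$ is again maximal. This is not formal, since restrictions of maximal representations to subalgebras need not be maximal; what rescues it is the semigroup structure. I would argue by contradiction: given a nontrivial dilation of $\Psi'|_{\Sc{A}}$, extend the dilated coefficient representation to a $^*$-representation of $\Sc{C}$ (Muhly--Solel), and then, using the invariance $\alpha_s(\Sc{J})=\Sc{J}$ together with the isometric Nica-covariant dilation results of Section~2 (Theorems \ref{dilation exists}, \ref{dilation is Nica-cov} and \ref{min iso dil}), co-extend the isometries $V'_s$ compatibly with the dilated coefficient representation to a covariant isometric Nica-covariant dilation of the whole pair $(\pi',V')$. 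This would be a nontrivial dilation of $\Psi'$, contradicting maximality, so $\Psi'|_{\Sc{A}}$ is maximal after all. Performing this co-extension while simultaneously preserving the Nica-covariance relations $V_s^{\prime *}V_t'=V_t'V_s^{\prime *}$ for $s\wedge t=0$ and the covariance with the enlarged coefficient representation is the delicate, technical heart of the argument, and is precisely where the dilation machinery of Section~2 is indispensable.
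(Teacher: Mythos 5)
Your overall architecture is the one the paper intends: the paper gives no detailed argument, only the remark that the proof follows \cite[Proposition 2.3]{kakkat} and rests on Dritschel--McCullough maximal dilations \cite{DritMcCul} together with the Muhly--Solel fact \cite{muhlysolel1} that maximal representations extend to $^*$-representations of any $C^*$-cover. Your easy direction (pairs over $\Sc{C}/\Sc{J}$ pull back to pairs over $\Sc{C}$, so the quotient map induces a complete contraction $\Phi$), the passage from a maximal representation $\Psi'$ of $\relcross$ to a $^*$-representation $\Pi$ of $C^*(\relcross)$ yielding a Nica-covariant isometric pair $(\pi',V')$, and the unique-extension-property/Arveson argument showing that a \emph{maximal} representation of $\Sc{A}$ extends through $\Sc{C}\to\Sc{C}/\Sc{J}$ are all correct and are exactly the ingredients the paper names.

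The genuine gap is the step you yourself identify as the heart of the matter: that $\Psi'|_{\Sc{A}}$ is maximal. You do not prove it; you sketch a contradiction argument whose engine is a co-extension of the isometries $V'_s$ across an arbitrary (maximal) dilation $\nu$ of $\Psi'|_{\Sc{A}}$, and you assert that the dilation machinery of Section~2 supplies this. It does not. Theorems \ref{dilation exists} and \ref{dilation is Nica-cov} dilate a \emph{contractive semigroup representation} to an isometric one, with no coefficient algebra present, and Theorem \ref{min iso dil} starts from a \emph{given} contractive covariant pair $(\sigma,T)$ living on one Hilbert space and co-extends the whole pair. In your situation you have $\nu$ on $\Sc{K}\supseteq\Sc{H}$ and no semigroup operators on $\Sc{K}$ at all: what is needed is a commutant-lifting-type statement --- operators on an enlargement of $\Sc{K}$ that simultaneously compress to $V'_s$ on $\Sc{H}$, intertwine $\nu$ covariantly, and satisfy Nica-covariance --- and nothing in Section~2 produces semigroup operators compatible with a prescribed dilation of the coefficient representation; commutant lifting in this setting is precisely the delicate issue (cf.\ \cite{DavKat1}). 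A symptom of the missing mechanism is that your argument uses the hypothesis that $\alpha$ acts by automorphisms only through $\alpha_s(\Sc{J})=\Sc{J}$, whereas in the $\B{Z}_+$ case treated in \cite{kakkat} automorphy does real work (for instance, isometric covariant pairs co-extend to unitary ones through the group crossed product, as in Theorem \ref{subalgebra}), and it is this extra structure that makes the maximality-of-the-restriction claim provable. As written, the decisive lemma is asserted with a plan that would not go through by the cited results, so the proof is incomplete at exactly the point where all the content lies.
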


Let $(\Sc{C},\Sc{S},\alpha)$ be a semigroup dynamical system where $\Sc{C}$ is a $C^*$-algebra and each $\alpha_s$ is an automorphism on $\Sc{C}$. Then it is immediate that the minimal automorphic dilation of $(\Sc{C},\Sc{S},\alpha)$ is simply
$(\Sc{C},\Sc{G},\alpha)$. If we view $\Sc{G}$ as being a discrete group then $\Sc{G}$ has a compact dual $\hat{\Sc{G}}$. Recall that for every character $\gamma$ in $\hat{\Sc{G}}$ we can define an automorphism $\tau_\gamma$ on $\Sc{P}(\Sc{C},\Sc{G})$ by
	\begin{equation*}
		\tau_\gamma\left(\sum_{i=1}^n\Sc{V}_{s_i}A_{s_i}\right)=\sum_{i=1}^n\gamma(s_i)\Sc{V}_{s_i}A_{s_i}.
	\end{equation*} 
The automorphism $\tau_\gamma$ extends to an automorphism of $\Sc{C}\times_\alpha\Sc{G}$ with $\Sc{C}$ as its fixed-point set
\cite[Proposition 7.8.3.]{ped}. We call $\tau_\gamma$ a \emph{gauge automorphism}. The gauge automorphisms restrict to automorphisms of
$\Sc{C}\sideset{_N}{_\alpha}{\mathop{\times}}\Sc{S}$.

\begin{lemma}\label{c* env lemma}
	Let $\Sc{A}$ be a unital operator algebra. Let $\Sc{C}$ be a $C^*$-cover of $\Sc{A}$ and let $\Sc{J}$ be the Shilov
	boundary of $\Sc{A}$ in $\Sc{C}$. Let $\alpha$ be an action of $\Sc{S}$ on $\Sc{C}$ by automorphisms which restrict 		to completely isometric automorphisms of $\Sc{A}$. Then
	\begin{equation*}
		C^*_{env}(\relcross)\cong
		C^*_{env}(\Sc{A})\times_{\dot{\alpha}}\Sc{G}.
	\end{equation*}
\end{lemma}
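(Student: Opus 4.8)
The plan is to reduce to the case in which the ambient $C^*$-cover $\Sc{C}$ is already the $C^*$-envelope of $\Sc{A}$, and then to force the Shilov ideal of $\relcross$ inside $\Sc{C}\times_\alpha\Sc{G}$ to vanish by exploiting the gauge action. First I would invoke the preceding proposition, which supplies a complete isometric isomorphism $\relcross\cong\Sc{A}/\Sc{J}\sideset{_N}{_{\Sc{C}/\Sc{J},\dot{\alpha}}}{\mathop{\times}}\Sc{S}$. Since the $C^*$-envelope is an invariant of the complete isometric isomorphism class, and since $\Sc{C}/\Sc{J}=C^*_{env}(\Sc{A})$ carries $\Sc{A}/\Sc{J}$ as a completely isometric copy of $\Sc{A}$ whose Shilov boundary is $(0)$, it suffices to prove the lemma under the extra hypothesis that $\Sc{C}=C^*_{env}(\Sc{A})$, i.e. that the Shilov boundary of $\Sc{A}$ in $\Sc{C}$ is $(0)$. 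Under this hypothesis the goal becomes $C^*_{env}(\relcross)\cong\Sc{C}\times_\alpha\Sc{G}$.

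Next I would check that $\Sc{C}\times_\alpha\Sc{G}$ is a $C^*$-cover of $\relcross$. By definition $\relcross$ contains the natural copy of $\Sc{A}$ together with the universal isometries $\{\Sc{V}_s\}_{s\in\Sc{S}}$, and since $C^*(\Sc{A})=\Sc{C}$ the $C^*$-algebra generated by $\relcross$ contains $\Sc{C}$ and all the $\Sc{V}_s$. Because $\Sc{C}$ is a $C^*$-algebra and $\alpha$ acts by automorphisms, the minimal automorphic dilation of $(\Sc{C},\Sc{S},\alpha)$ is simply $(\Sc{C},\Sc{G},\alpha)$, so Theorem \ref{subalgebra} identifies this generated $C^*$-algebra as $\Sc{C}\times_\alpha\Sc{G}$. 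Thus $\Sc{C}\times_\alpha\Sc{G}$ is a $C^*$-cover of $\relcross$, and proving the claim is equivalent to showing that the Shilov ideal $\Sc{I}$ of $\relcross$ in $\Sc{C}\times_\alpha\Sc{G}$ is zero.

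The heart of the argument uses the gauge automorphisms. For each $\gamma\in\hat{\Sc{G}}$ the automorphism $\tau_\gamma$ fixes $\Sc{C}$ pointwise and sends $\Sc{V}_sA\mapsto\gamma(s)\Sc{V}_sA$, so it leaves $\relcross$ invariant. Since the Shilov ideal is the largest boundary ideal, hence canonical, every automorphism of $\Sc{C}\times_\alpha\Sc{G}$ preserving $\relcross$ must preserve $\Sc{I}$; thus $\tau_\gamma(\Sc{I})=\Sc{I}$ for all $\gamma$. As $\Sc{G}$ is abelian, hence amenable, there is a faithful conditional expectation $E\colon\Sc{C}\times_\alpha\Sc{G}\to\Sc{C}$, realised as $E=\int_{\hat{\Sc{G}}}\tau_\gamma\,d\gamma$ against normalised Haar measure on $\hat{\Sc{G}}$, and gauge-invariance of $\Sc{I}$ gives $E(\Sc{I})\subseteq\Sc{I}\cap\Sc{C}$. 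I would then argue that $\Sc{I}\cap\Sc{C}$ is a boundary ideal for $\Sc{A}$ in $\Sc{C}$: the quotient map onto $(\Sc{C}\times_\alpha\Sc{G})/\Sc{I}$ is completely isometric on $\relcross$, in particular on the copy of $\Sc{A}$, and its restriction to $\Sc{C}$ has kernel exactly $\Sc{I}\cap\Sc{C}$, so the composition $\Sc{A}\hookrightarrow\Sc{C}\to\Sc{C}/(\Sc{I}\cap\Sc{C})$ is a complete isometry. Since $\Sc{C}=C^*_{env}(\Sc{A})$ admits no nonzero boundary ideal, $\Sc{I}\cap\Sc{C}=(0)$, whence $E(\Sc{I})=(0)$; faithfulness of $E$ on the ideal $\Sc{I}$ (apply it to $x^*x$ for $x\in\Sc{I}$) then yields $\Sc{I}=(0)$. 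Undoing the first reduction gives $C^*_{env}(\relcross)\cong(\Sc{C}/\Sc{J})\times_{\dot{\alpha}}\Sc{G}=C^*_{env}(\Sc{A})\times_{\dot{\alpha}}\Sc{G}$.

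I expect the main obstacle to be the interplay between the gauge action and the canonicity of the Shilov ideal: one must verify carefully that each $\tau_\gamma$ genuinely preserves $\relcross$ (so that $\Sc{I}$ is gauge-invariant) and that gauge-invariance together with faithfulness of $E$ legitimately reduces the vanishing of $\Sc{I}$ to the vanishing of the boundary ideal $\Sc{I}\cap\Sc{C}$. Identifying $\Sc{I}\cap\Sc{C}$ as a boundary ideal, and then appealing to the fact that $\Sc{C}=C^*_{env}(\Sc{A})$ has no such ideal, is exactly the point where the reduction in the first paragraph earns its keep.
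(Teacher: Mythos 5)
Your proof is correct and follows essentially the same route as the paper's: reduce via the preceding proposition to the case $\Sc{C}=C^*_{env}(\Sc{A})$, then show the Shilov ideal $\Sc{I}$ of $\relcross$ in $\Sc{C}\times_\alpha\Sc{G}$ is gauge-invariant and meets the fixed-point algebra $\Sc{C}$ in a boundary ideal, which must vanish. Your appeal to the faithful conditional expectation $E=\int_{\hat{\Sc{G}}}\tau_\gamma\,d\gamma$ merely makes explicit the step the paper leaves implicit (that a nonzero gauge-invariant ideal meets $\Sc{C}$ nontrivially), and your observation that one needs $\tau_\gamma$ to preserve $\relcross$ before concluding $\tau_\gamma(\Sc{I})=\Sc{I}$ is in fact slightly more careful than the paper's bare assertion that $\Sc{I}$ is invariant under automorphisms.
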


\begin{proof}
	By the preceding proposition it suffices to show that
	\begin{equation*}
		C^*_{env}(\Sc{A}/\Sc{J}\sideset{_N}{_{\Sc{C}/\Sc{J},\dot{\alpha}}}{\mathop{\times}}\Sc{S})\cong
		\Sc{C}/\Sc{J}\times_{\dot{\alpha}}\Sc{G}.
	\end{equation*}

	The algebra $\Sc{A}/\Sc{J}\sideset{_N}{_{\Sc{C}/\Sc{J},\dot{\alpha}}}{\mathop{\times}}\Sc{S}$ embeds completely
	isometrically into $\Sc{C}/\Sc{J}\times_{\dot{\alpha}}\Sc{G}$ and 
	generates it as a $C^*$-algebra. Let $\Sc{I}$ be the Shilov boundary of
	$\Sc{A}/\Sc{J}\sideset{_N}{_{\Sc{C}/\Sc{J},\dot{\alpha}}}{\mathop{\times}}\Sc{S}$ in 
	$\Sc{C}/\Sc{J}\times_{\dot{\alpha}}\Sc{G}$. Suppose that $\Sc{I}\not=\{0\}$.

	The ideal $\Sc{I}$ is invariant under automorphisms of 
	$\Sc{C}/\Sc{J}\times_{\dot{\alpha}}\Sc{G}$ and hence by the gauge automorphisms of
	$\Sc{C}/\Sc{J}\times_{\dot{\alpha}}\Sc{G}$. Therefore $\Sc{I}$ has non-trivial intersection with the fixed points of
	the gauge automorphisms, i.e. $\Sc{I}\cap\Sc{C}/\Sc{J}\not=\{0\}$. But $\Sc{I}\cap\Sc{C}/\Sc{J}$ is a boundary ideal 		for $\Sc{A}$ in $\Sc{C}/\Sc{J}$. Hence $\Sc{I}=\{0\}$. This proves the result.
\end{proof}

We can now prove the main result of this section. This theorem generalises the result of Kakariadis and Katsoulis \cite{kakkat} from the semigroup $\B{Z}_+$ to our more general
semigroups $\Sc{S}=\sum_{i=1}^{\oplus k}\Sc{S}_i$. From another viewpoint, in the case when
$\Sc{A}$ is a $C^*$-algebra and $\isocross\cong\cross$ we have that the $C^*$-envelope of an associated tensor algebra is a crossed product algebra, by Remark \ref{tensor alg} and Corollary \ref{same alg}. This was shown for abelian $C^*$-algebras by Duncan and Peters \cite{DuncanPeters}.

By \cite[Proposition 10.1]{DavKat1} the group $\text{Aut}(\Sc{A})$ of completely isometric automorphisms on the unital operator algebra $\Sc{A}$ is isomorphic to the group of
completely isometric automorphisms on $C^*_{env}(\Sc{A})$ which leave $\Sc{A}$ invariant. Thus, if $\{\alpha_s\}_{s\in\Sc{S}}$ a family of completely isometric automorphisms defining an action of $\Sc{S}$ on $\Sc{A}$, then they can be extended to a family completely isometric automorphisms defining an action of $\Sc{S}$ on $C^*_{env}(\Sc{A})$. 

\begin{theorem}\label{c* env}
	Let $\Sc{A}$ be a unital operator algebra. Let $\alpha$ be an action of $\Sc{S}$ on $\Sc{A}$ by completely isometric
	automorphisms. Denote also by $\alpha$ the extension of this action to $C^*_{env}(\Sc{A})$.
	Then 
	\begin{equation*}
		C^*_{env}(\isocross)\cong C^*_{env}(\Sc{A})\times_\alpha \Sc{G}.
	\end{equation*}
\end{theorem}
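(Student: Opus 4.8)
The plan is to reduce the statement to Lemma~\ref{c* env lemma} by realising $\isocross$ as a concrete relative semicrossed product. Take as $C^*$-cover the envelope itself, $\Sc{C}=C^*_{env}(\Sc{A})$, carrying the extension of $\alpha$ to $^*$-automorphisms supplied by the hypothesis. Because $\Sc{C}$ is the $C^*$-envelope of $\Sc{A}$, the Shilov boundary $\Sc{J}$ of $\Sc{A}$ in $\Sc{C}$ is $\{0\}$, so $\Sc{C}/\Sc{J}\cong C^*_{env}(\Sc{A})$ and $\dot\alpha=\alpha$. With this choice Lemma~\ref{c* env lemma} yields
\begin{equation*}
	C^*_{env}(\relcross)\cong C^*_{env}(\Sc{A})\times_{\alpha}\Sc{G}.
\end{equation*}
Since a completely isometric isomorphism of operator algebras induces an isomorphism of their $C^*$-envelopes, it now suffices to prove that $\isocross$ is completely isometrically isomorphic to the relative semicrossed product $\relcross$ formed with $\Sc{C}=C^*_{env}(\Sc{A})$.

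One inequality is immediate. Inside $\Sc{C}\sideset{_{N}}{_\alpha}{\mathop{\times}}\Sc{S}$ the canonical copy of $\Sc{A}$ together with the universal isometries $\{\Sc{V}_s\}_{s\in\Sc{S}}$ forms an isometric Nica-covariant representation of $(\Sc{A},\Sc{S},\alpha)$, the covariance $A\Sc{V}_s=\Sc{V}_s\alpha_s(A)$ being inherited from that of $\Sc{C}$. By the universal property of $\isocross$ this representation induces a completely contractive epimorphism $q\colon\isocross\to\relcross$ fixing the generators. It remains to show that $q$ is a complete isometry.

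For the reverse inequality I would start from an arbitrary isometric Nica-covariant representation $(\sigma,V)$ of $(\Sc{A},\Sc{S},\alpha)$ on $\Sc{H}$ and dilate the covariant pair to an isometric Nica-covariant representation $(\pi,\tilde V)$ of the $C^*$-dynamical system $(\Sc{C},\Sc{S},\alpha)$ on a space $\Sc{K}\supseteq\Sc{H}$, in such a way that $\Sc{H}$ is co-invariant, $\pi$ is a $^*$-representation of $\Sc{C}=C^*_{env}(\Sc{A})$, $P_{\Sc{H}}\pi(A)|_{\Sc{H}}=\sigma(A)$, and $P_{\Sc{H}}\tilde V_s|_{\Sc{H}}=V_s$. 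Granting this, $(\pi,\tilde V)$ factors through $\Sc{C}\sideset{_{N}}{_\alpha}{\mathop{\times}}\Sc{S}$; restricting to the subalgebra generated by $\Sc{A}$ and the isometries and using co-invariance to make compression to $\Sc{H}$ multiplicative on $\Sc{P}(\Sc{A},\Sc{S})$, one obtains
\begin{equation*}
	\|(\sigma\times V)(p)\|=\big\|P_{\Sc{H}}(\pi|_{\Sc{A}}\times\tilde V)(p)|_{\Sc{H}}\big\|\le\big\|(\pi|_{\Sc{A}}\times\tilde V)(p)\big\|,
\end{equation*}
and the right-hand side is at most the norm of $p$ in $\relcross$, since $(\pi,\tilde V)$ is a particular representation of $\Sc{C}\sideset{_{N}}{_\alpha}{\mathop{\times}}\Sc{S}$. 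Taking the supremum over all $(\sigma,V)$, and running the same argument at each matrix level, shows that $q$ is a complete isometry, hence $\isocross\cong\relcross$ and the theorem follows.

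The construction of this equivariant dilation is the step I expect to be the main obstacle; it should parallel the proof of the preceding proposition and of \cite[Proposition 2.3]{kakkat}. The two ingredients are available: $\sigma$ may be dilated to a maximal representation of $\Sc{A}$ (Dritschel--McCullough \cite{DritMcCul}), which then extends to a $^*$-representation of $\Sc{C}=C^*_{env}(\Sc{A})$ (Muhly--Solel \cite{muhlysolel1}), while compatible Nica-covariant isometries are produced by the isometric dilation theory of Section~2 (Theorems~\ref{dilation exists} and~\ref{dilation is Nica-cov}, in the spirit of Theorem~\ref{min iso dil}). The delicate point is to perform these two dilations simultaneously and $\Sc{S}$-equivariantly, so that the \emph{full} covariance $\pi(C)\tilde V_s=\tilde V_s\pi(\alpha_s(C))$ holds for every $C\in\Sc{C}$ and not merely for $C\in\Sc{A}$: on the $C^*$-algebra $\Sc{C}$ this identity must survive passage to adjoints, which a non-unitary isometry does not automatically respect, so it cannot be read off from the covariance on $\Sc{A}$ alone. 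Exploiting that each $\alpha_s$ is an automorphism---so that $\alpha$ already extends to a $\Sc{G}$-action on $\Sc{A}$ and on $\Sc{C}$ by inverting the $\alpha_s$---one can arrange the isometries compatibly with this group action and force the full covariance, at which point the argument closes.
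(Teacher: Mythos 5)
Your overall architecture agrees with the paper's: realise $\isocross$ as a relative semicrossed product and feed it into Lemma~\ref{c* env lemma}. But there is a genuine gap, and it sits exactly where you flag it. By choosing the cover $\Sc{C}=C^*_{env}(\Sc{A})$ you oblige yourself to dilate an arbitrary isometric Nica-covariant representation $(\sigma,V)$ of $(\Sc{A},\Sc{S},\alpha)$ to a pair $(\pi,\tilde V)$ in which $\pi$ is a $^*$-representation of $C^*_{env}(\Sc{A})$ and the full covariance $\pi(C)\tilde V_s=\tilde V_s\pi(\alpha_s(C))$ holds for all $C$ in the envelope. None of the ingredients you cite delivers this: Theorem~\ref{min iso dil} assumes from the outset that the algebra is a $C^*$-algebra and that $\sigma$ is a representation of it, so it cannot be applied to the nonself-adjoint $\Sc{A}$; Dritschel--McCullough produces a maximal dilation of $\sigma$ and Muhly--Solel extends it to the envelope, but that dilation is constructed with no reference to $V$, and there is no reason the isometries extend covariantly to the enlarged space --- while conversely the isometric dilations of Section~2 are blind to $\sigma$. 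Performing the two dilations simultaneously and $\Sc{S}$-equivariantly is the entire difficulty, and your closing sentence (``one can arrange the isometries compatibly \ldots{} and force the full covariance'') is an assertion, not an argument. The isomorphism $\isocross\cong\relcross$ with $\Sc{C}=C^*_{env}(\Sc{A})$ that you are after is in fact true, but only a posteriori, via the paper's proof combined with the proposition preceding Lemma~\ref{c* env lemma}; your proposal does not establish it.

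The paper's proof shows how to sidestep the obstacle rather than overcome it: Lemma~\ref{c* env lemma} is stated for an \emph{arbitrary} $C^*$-cover $\Sc{C}$ (the preceding proposition allows passage to the quotient by the Shilov ideal, which is why the answer is always $C^*_{env}(\Sc{A})\times_{\dot\alpha}\Sc{G}$), so one never needs the cover to be the envelope. It therefore suffices to exhibit \emph{some} cover realizing $\isocross$ as a relative semicrossed product, and the paper manufactures one from the universal isometric representation itself: since each $\alpha_s$ is an automorphism, the Hilbert space inductive limit of copies of $\Sc{H}$ along the maps $\Sc{V}_{t-s}$ turns the universal isometries into unitaries $U_s$ and carries a completely isometric representation $\pi$ of $\Sc{A}$ satisfying $\pi(A)U_s=U_s\pi(\alpha_s(A))$; the unitaries then implement genuine $^*$-automorphisms of $\Sc{C}:=C^*(\pi(\Sc{A}))$ extending $\alpha$, giving $\isocross\cong\relcross$ for this $\Sc{C}$, whatever $C^*$-algebra it happens to be. If you want to salvage your route, the repair is the same move: replace the maximal/equivariant dilation by this inductive-limit unitary dilation, and let the generality of Lemma~\ref{c* env lemma} absorb the fact that $C^*(\pi(\Sc{A}))$ need not coincide with $C^*_{env}(\Sc{A})$.
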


\begin{proof}
	We will show that $\isocross$ is isomorphic to a relative semicrossed product. The result will then follow by Lemma \ref{c* env lemma}.
	
	Let $\{\Sc{V}_s\}_{s\in\Sc{S}}$ be the universal isometries in $\isocross$ acting on a Hilbert space $\Sc{H}$. For each $s\in\Sc{S}$  let 
	$\Sc{H}_s=\Sc{H}$ and define maps $\Sc{V}^{s,t}$ when $s\leq t$
	\begin{equation*}
			\Sc{V}^{s,t}:\Sc{H}_s\rightarrow\Sc{H}_t
	\end{equation*}
	by $\Sc{V}^{s,t}=\Sc{V}_{t-s}$. Let $\Sc{K}$ be the Hilbert space inductive limit of the directed system
	$(\Sc{H}_s)_{s\in\Sc{S}}$.
	
	For each $A\in\Sc{A}$ the commutative diagram
	\begin{equation*}
	\begin{CD}
		\Sc{H} @> \Sc{V}_s>> \Sc{H}\\
		@V A VV		@V \alpha_{s}^{-1}(A) VV\\
		\Sc{H} @> \Sc{V}_s>> \Sc{H}
	\end{CD}
	\end{equation*}
	defines an operator $\pi(\Sc{A})$ on $\Sc{K}$. Thus we have a completely isometric representation $\pi:\Sc{A}\rightarrow\Sc{B}(\Sc{K})$.
	
	Now for each $s,t\in\Sc{S}$ define operator $U^s_t:\Sc{H}_s\rightarrow\Sc{H}_s$ by $U^s_t=\Sc{V}_t$. Passing to the direct limit
	we get a family of commuting unitaries $\{U_s\}_{s\in\Sc{S}}$ on $\Sc{K}$ satisfying
	\begin{equation*}
		\pi(A)U_s=U_s\pi(\alpha_s(A)).
	\end{equation*}
	The unitaries $\{U_s\}_{s\in\Sc{S}}$ thus define $^*$-automorphisms of $\Sc{C}:=C^*(\pi(A))$ extending $\alpha$. Thus
	\begin{equation*}
		\isocross\cong\Sc{A}\sideset{_N}{_{\Sc{C},{\alpha}}}{\mathop{\times}}\Sc{S}.
	\end{equation*}
	The result now follows by Lemma \ref{c* env lemma}.
\end{proof}

  \subsection{Acknowledgements}The author would like to thank his advisor, Ken Davidson, for his advice and support.


\begin{thebibliography}{99}
\bibitem{Arveson1}Arveson, W. B. \emph{Operator algebras and measure preserving automorphisms.} Acta Math. 118 1967 95--109.

\bibitem{Arveson2}Arveson, W. B. \emph{Subalgebras of $C^*$-algebras.} Acta Math. 123 (1969), 141--224.

\bibitem{Arveson3}Arveson, W. B. \emph{Subalgebras of $C^*$-algebras. II} Acta Math. 128 (1972), 271--308.

\bibitem{DavKat0} Davidson, K. R.; Katsoulis, E. G. \emph{Semicrossed products of simple $C^*$-algebras.} Math. Ann. 342 (2008), no. 3, 515--525.

\bibitem{DavKat3}Davidson, K. R.; Katsoulis, E. G. \emph{Dilating covariant representations of the non-commutative disc algebras.} J. Funct. Anal. 259 (2010), no. 4, 817--831.

\bibitem{DavKat4} Davidson, K. R.; Katsoulis, E. G. \emph{Biholomorphisms of the unit ball of $\mathbb{C}^n$ and semicrossed products.} Operator theory live, 69--80, Theta Ser. Adv. Math., 12, Theta, Bucharest, 2010.

\bibitem{DavKat6}Davidson, K. R.; Katsoulis, E. G. \emph{Semicrossed products of the disc algebra}, to appear in Proc. Amer. Math. Soc. 

\bibitem{DavKat1}Davidson, K. R.; Katsoulis, E. G. \emph{Dilation theory, commutant lifting and semicrossed products}, to appear in Doc. Math.

\bibitem{DritMcCul}Dritschel, M.; McCullough, S. \emph{Boundary representations of families of representations of operator
algebras and spaces}, J. Operator Theory 53 (2005), 159--197.

\bibitem{Duncan}Duncan, B. L. \emph{$C^∗$-envelopes of universal free products and semicrossed products for multivariable dynamics.} Indiana Univ. Math. J. 57 (2008), no. 4, 1781--1788.

\bibitem{DuncanPeters}Duncan, B. L.; Peters, J. R. \emph{Operator algebras and representations from commuting semigroup actions.} Preprint arXiv:1008.2244v1.

\bibitem{Fowler}Fowler, N. J. \emph{Discrete product systems of Hilbert bimodules.} Pacific J. Math. 204 (2002), no. 2, 335--375. 

\bibitem{Hamana}Hamana, M. \emph{Injective envelopes of operator systems.} Publ. Res. Inst. Math. Sci. 15 (1979), no. 3, 773--785.

\bibitem{kak}Kakariadis, E. T. A. \emph{Semicrossed products of $C^*$-algebras and their $C^*$-envelopes.} Preprint arXiv:1102.2252v2.

\bibitem{kakkat}Kakariadis, E. T. A; Katsoulis, E. G. \emph{Semicrossed products of operator algebras and their $C^*$-envelopes.} Preprint arXiv:1008.2374v1.

\bibitem{laca}Laca, M. \emph{From endomorphisms to automorphisms and back: dilations and full corners.} J. London Math. Soc. (2) 61 (2000), no. 3, 893--904.

\bibitem{LacaRaeburn}Laca, M.; Raeburn, I. \emph{Semigroup crossed products and the Toeplitz algebras of nonabelian groups.} J. Funct. Anal. 139 (1996), no. 2, 415--440.

\bibitem{LingMuhly}Ling, K; Muhly, P. S.
\emph{An automorphic form of Ando's theorem. }
Integral Equations Operator Theory 12 (1989), no. 3, 424--434.

\bibitem{McAseyMuhly}McAsey, M.; Muhly, P. S.; Saito, K. \emph{Nonselfadjoint crossed products (invariant subspaces and maximality).} Trans. Amer. Math. Soc. 248 (1979), no. 2, 38--409. 

\bibitem{Mlak} Mlak, W. \emph{Unitary dilations in case of ordered groups.} Ann. Polon. Math. 17 1966 321--328.

\bibitem{muhlysolel1}Muhly, P.; Solel, B. \emph{An algebraic characterization of boundary representations}, Nonselfadjoint Operator Algebras, Operator Theory, and Related Topics, Birkh\"{a}user Verlag, Basel 1998, pp. 189--196.

\bibitem{murph}Murphy, G. J. \emph{Crossed products of $C^*$-algebras by endomorphisms.} Integral Equations Operator Theory 24 (1996), no. 3, 298--319.

\bibitem{Nica}Nica, A. \emph{$C^∗$-algebras generated by isometries and Wiener-Hopf operators.} J. Operator Theory 27 (1992), no. 1, 17--52.

\bibitem{Paul} Paulsen, V. I. \emph{Completely bounded maps and operator algebras.} Cambridge Studies in Advanced Mathematics, 78. Cambridge University Press, Cambridge, 2002.

\bibitem{ped}Pedersen, G. K. \emph{$C^*$-algebras and their automorphism groups.} London Mathematical Society Monographs, 14. Academic Press, Inc. [Harcourt Brace Jovanovich, Publishers], London-New York, 1979.

\bibitem{peters1}Peters, J. \emph{Semicrossed products of $C^*$-algebras.} J. Funct. Anal. 59 (1984), no. 3, 498--534.

\bibitem{Shal1} Shalit, O. M. \emph{Dilation theorems for contractive semigroups.} arXiv:1004.0723v1

\bibitem{Shal2}Shalit, O. M.
\emph{Representing a product system representation as a contractive semigroup and applications to regular isometric dilations.}
Canad. Math. Bull. 53 (2010), no. 3, 550--563.

\bibitem{Solel2} Solel, B.
\emph{Representations of product systems over semigroups and dilations of commuting CP maps.}
J. Funct. Anal. 235 (2006), no. 2, 593--618. 

\bibitem{Solel} Solel, B. \emph{Regular dilations of representations of product systems.}
Math. Proc. R. Ir. Acad. 108 (2008), no. 1, 89--110. 


\bibitem{Nagy}Sz.-Nagy, B.; Foias, C.; Bercovici ,H.; K\'erchy, L. \emph{Harmonic analysis of operators on Hilbert space.}  Universitext. Springer, New York, 2010.

\bibitem{Timotin}Timotin D. \emph{Regular dilations and models for multicontractions.} Indiana
Univ. Math. J. 47 (1998), 671--684.

\bibitem{Var}Varopoulos, N. Th. \emph{On an inequality of von Neumann and an application of the metric theory of tensor products to operators theory.} J. Functional Analysis 16 (1974), 83--100.

\end{thebibliography}
\end{document}